\newtheorem{theorem}{Theorem}[section]
\newtheorem{corollary}[theorem]{Corollary}
\newtheorem{lemma}[theorem]{Lemma}
\newtheorem{proposition}[theorem]{Proposition}
\theoremstyle{definition}
\newtheorem{definition}[theorem]{Definition}
\newtheorem*{remark}{Remark}
\newcommand{\ZZ}{\mathbb{Z}}
\title[Ergodic properties of $N$-continued fractions]
      {Ergodic properties of $N$-continued fractions}
\author[Peng Sun]{}
\subjclass[2010]{Primary: 11J70, 11K50, 37A45.}
 \keywords{invariant measure, Gauss transformation, continued fraction, Khinchin's constant,
 L\'evy's constat, Lyapunov exponent.}
 \email{sunpeng@cufe.edu.cn}
\thanks{The author is supported by NSFC No. 11571387.}
\begin{document}

\maketitle\ 

\centerline{\scshape Peng Sun}
\medskip
{\footnotesize
 \centerline{China Economics and Management Academy}
   \centerline{Central University of Finance and Economics}
   \centerline{Beijing 100081, China}
} 

\bigskip

\begin{abstract}
    We discuss some ergodic properties of the generalized Gauss transformation $$T_N(x)=\{\frac
{N}{x}\}.$$
We generalize a series of results for the regular
continued fractions, such as Khinchin's constant  and L\'evy's constant.

\end{abstract}


\section{Introduction}

The Gauss transformation 
$$T(x)=\{\frac1x\},$$
where $\{x\}$ denotes the decimal part of $x$,
has been well studied. It has a strong relation with continued fractions
and has numerous applications to number theory, dynamical systems and
etc. This transformation has a unique absolutely continuous ergodic measure
on $I=[0,1)$:
\begin{equation*}
d\mu(x)=\frac{1}{\ln2}\frac{1}{1+x}dm(x),
\end{equation*}
where $m$ is the Lebesgue measure. The expression of the density
function is rather simple.
However, for most maps that may be viewed as generalizations of Gauss transformation,
we do not have such expressions
and in general we do not expect to have a simple explicit expression, even
for some special cases (for instance, $T(x)=\{\dfrac{1}{x^2}\}$. Discussions
on such transformations can be found in \cite{Choe}).
It seems that there is no general method for finding
the explicit expression of the absolutely continuous ergodic measure.

Recently we found an approach and obtained explicit expressions of the
absolutely continuous ergodic measures for
a family of generalized Gauss transformations.
This leads us to a series of interesting results.


\begin{definition}
Let $N$ be a positive integer. Define the 
generalized Gauss transformation
on $I=[0,1)$ by
\begin{equation*}
T_N(x)=\begin{cases}0,& x=0,\\
\{\cfrac N x\},& x\ne 0,
\end{cases}.
\end{equation*}
\end{definition}
\begin{definition}\label{def:gcf}
For $x\in I$, let 
\begin{equation}\label{eq:gcf}
x=\cfrac{N}{a_1+\cfrac N{a_2+\cfrac N{a_3+\cfrac N\ddots}}}=[a_1,a_2,a_3,\cdots]_N
\end{equation}
be the generalized continued fraction of $x$, where $a_k=\dfrac{N}{T_N^{k-1}(x)}-T_N^k(x)$
is the integer part of $\dfrac{N}{T_N^{k-1}(x)}$
for each $k=1,2,\cdots$. We put $a_k=\infty$ if $T_N^{k-1}(x)=0$.
\end{definition}

\begin{remark}
Note that for all $k$, $a_k\ge N$ and
$T_N^k([a_1,a_2,\cdots]_N)=[a_{k+1},a_{k+2},\cdots]_N$.
There is $K$ such that $a_K=\infty$ if and only if $x$ is rational, in which
case $a_k=\infty$ for all $k\ge K$.
\end{remark}

\begin{theorem}\label{thmain}
For every positive integer $N$, $T_N$ has a unique absolutely continuous ergodic
invariant
measure 
$$d\mu_N(x)=\frac1{\ln (N+1)-\ln N}\cdot\frac 1{N+x}dm(x).$$
\end{theorem}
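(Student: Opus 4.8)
The plan is to verify the three claims—invariance, absolute continuity (which is built into the statement), and ergodicity—more or less independently, then conclude uniqueness from ergodicity together with a standard argument.

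First I would check that $\mu_N$ is $T_N$-invariant by a direct computation with the transfer (Perron–Frobenius) operator. The branches of $T_N$ are indexed by $a\ge N$: on the interval where $a_1=a$, the inverse branch is $\psi_a(y)=\frac{N}{a+y}$, and $T_N\circ\psi_a=\mathrm{id}$. So for a test function $f$ one has
\begin{equation*}
\int_I f\circ T_N\,d\mu_N=\sum_{a\ge N}\int_I f(y)\,h(\psi_a(y))\,|\psi_a'(y)|\,dm(y),
\end{equation*}
where $h(x)=\frac{1}{N+x}$ is the unnormalized density. Since $|\psi_a'(y)|=\frac{N}{(a+y)^2}$ and $h(\psi_a(y))=\frac{a+y}{N(a+1+y)}$, the $a$-th summand of the density is $\frac{1}{(a+y)(a+1+y)}=\frac{1}{a+y}-\frac{1}{a+1+y}$, and the sum over $a\ge N$ telescopes to $\frac{1}{N+y}=h(y)$. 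This shows $h\,dm$ is invariant; the normalizing constant is $\int_0^1\frac{dx}{N+x}=\ln(N+1)-\ln N$, giving the stated $\mu_N$.

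For ergodicity I would argue that $T_N$ has the same structural features that make the classical Gauss map ergodic, so one of the standard routes applies. The cleanest is to verify that $T_N$ is a uniformly expanding (Rényi-type) piecewise-$C^2$ map: each branch $T_N|_{\psi_a(I)}$ is a $C^2$ diffeomorphism onto $(0,1)$, and one checks a Rényi/Adler distortion bound $|T_N''|/|T_N'|^2 \le C$ uniformly in the branch. The key expansion estimate is that $|(T_N^k)'|\to\infty$: although individual branches near $a=N$ are not uniformly expanding (the branch with $a_1=N$ has derivative close to $1$ near $x=1$), one shows that no orbit can stay in the ``bad'' region, e.g. by passing to an induced map or by noting that two consecutive symbols cannot both be the minimal value in a way that obstructs expansion—exactly as in the treatment of the classical Gauss map. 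Then the Folklore/Rényi theorem (or a Knopp-type lemma: any $T_N$-invariant set has density $0$ or $1$ at Lebesgue density points, using that the images of cylinder sets have bounded distortion and cover $I$) yields ergodicity of $\mu_N$.

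I expect the main obstacle to be precisely this expansion/distortion step for ergodicity: handling the branch $a_1=N$, whose derivative $N/(N+y)^{2}\cdot\big|_{\text{rescaled}}$ degenerates near the endpoint, and making the Knopp-lemma bounded-distortion argument go through uniformly over the infinitely many branches. Once ergodicity is in hand, uniqueness of the absolutely continuous invariant measure is immediate: if $\nu\ll m$ were another $T_N$-invariant probability measure, then by ergodicity of $\mu_N$ and the Radon–Nikodym/ergodic decomposition argument, $\nu$ and $\mu_N$ are either equal or mutually singular; but two absolutely continuous ergodic invariant measures for the same map whose supports both have full Lebesgue measure must coincide (a convex combination would be invariant and absolutely continuous but not ergodic, contradicting that the ergodic a.c.i.m. is unique up to the decomposition). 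Alternatively, uniqueness follows from the fact that the transfer operator computation above shows $h$ is, up to scaling, the only fixed density, via the spectral gap of the transfer operator on a suitable function space (bounded variation), which the distortion bounds also provide.
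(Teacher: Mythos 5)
Your proposal is correct, but your ergodicity step takes a genuinely different route from the paper. The invariance check you give is essentially the paper's own computation in a different dress: the paper verifies $\mu_N(T_N^{-1}((0,\alpha)))=\mu_N((0,\alpha))$ (and derives the density in Section 2) by exactly the telescoping $\frac{1}{a+y}-\frac{1}{a+1+y}$ summed over the branches $a\ge N$, which is your transfer-operator identity in integrated form. For ergodicity, however, the paper does not appeal to the Folklore/R\'enyi theorem or a spectral gap; following Ryll-Nardzewski it argues by hand: for an invariant set $E$ with $m(E)=d<1$ it uses the M\"obius form $y=\frac{rx+r'}{qx+q'}$ of the inverse branches of $T_N^{2n}$, where $\frac{r}{q},\frac{r'}{q'}$ are consecutive convergents with $q'r-r'q=N^{2n}$, to bound the relative measure of $E$ in the interval with endpoints $\frac{r'}{q'}$ and $\frac{r+r'}{q+q'}$ by $1-\frac{1-d}{1+d}<1$, and then concludes $m(E)=0$ from a Vitali covering and the Lebesgue density theorem; uniqueness follows because the density $\frac1{N+x}$ is bounded between $\frac1{N+1}$ and $\frac1N$, so $\mu_N\sim m$ and any absolutely continuous invariant measure is dominated by the ergodic $\mu_N$. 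Your route buys generality and is acknowledged by the paper itself (it cites Bowen's theorem as guaranteeing existence and ergodicity), while the paper's argument is self-contained and explicitly exploits the convergent structure, which is reused later. Also, the obstacle you flag is milder than you fear: $|T_N''|/|T_N'|^2=2x/N\le 2$ uniformly over all (infinitely many) full branches, and $|T_N'(x)|=N/x^2\ge N$, so expansion is uniform for every $N\ge 2$; only $N=1$, the classical Gauss map, needs the usual second-iterate argument, and your uniqueness paragraph would be cleaner if phrased via $\nu\ll m\sim\mu_N$ plus ergodicity of $\mu_N$ forcing $\frac{d\nu}{d\mu_N}$ to be a.e.\ constant.
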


\begin{remark}
Since the first draft of this work had been posted on arXiv, we learned that
there have already been some works on this topic:
\begin{enumerate}
\item Continued fractions of the form \eqref{eq:gcf} were first introduced
in \cite{BG},
in which they were named $N$-continued fractions,
 to obtain periodic expansions for quadratic irrational numbers.
However, in \cite{BG} the authors do not require that $a_k\ge N$ for all
$k$.

\item In \cite{AW}, some basic and number theoretic properties of $N$-continued
fractions are discussed. It is shown that the $N$-continued fraction expansion
of a number $x\in[0,1)$ is induced by $T_N$ (as in Definition \ref{def:gcf}) if and
only if $a_k\ge N$ for all $k$.

\item In \cite{DK} the measure in Theorem \ref{thmain} is presented without
a proof.  The derivation process is not provided either.
\end{enumerate}
\end{remark}

In this paper, we attempt to give an idea about how we find the measures
$\mu_N$ in Section 2 and present a proof of ergodicity in Section 3.
In Section 4 and Section 5 we discuss some results analogous to the so-called
Khinchin's constant and L\'evy's constant for regular continued fractions.
We consider the following theorem to be the highlight of such results:

\begin{theorem}\label{thmaincoeff}
Assume that $x\in I$ and $x=[a_1,a_2,a_3,\cdots]_N$.
\begin{enumerate}
\item Let
$$K_N(x)=\lim_{n\to\infty}\sqrt[n]{\prod_{k=1}^na_k}$$
be the geometric mean of the coefficients. For Lebesgue almost every
$x$, $K_N(x)=K_N$ is a constant that depends only on $N$. In particular,
we have
$$\lim_{N\to\infty}\frac{K_N}{N}=e.$$
\item Let 
$$[a_1,\cdots, a_n]_N=\frac{A_n(x)}{B_n(x)}\text{ with }A_n(x), B_n(x)\in\ZZ^+\text{
(without reduction).}$$
Then:
\begin{enumerate}[(i)]
\item For almost every $x\in I$,
$$\lim_{n\to\infty}\frac1n\ln B_n(x)=L_N(x)=L_N$$
is a  constant that depends only on $N$. In particular,
\begin{equation}\label{eqLN}
\lim_{N\to\infty}(L_N-\ln N)=1.
\end{equation}
\item For every $x\in I$,
$$\liminf_{n\to\infty}\frac1n\ln B_n(x)\ge\ln\frac{\sqrt{N^2+4N}+N}{2}.$$
\end{enumerate}
\end{enumerate}
\end{theorem}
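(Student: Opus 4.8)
The plan is to derive parts (1) and (2)(i) from the Birkhoff ergodic theorem applied to the system $(I,T_N,\mu_N)$ --- whose ergodicity and explicit invariant density $\frac{1}{(\ln(N+1)-\ln N)(N+x)}$ are furnished by Theorem~\ref{thmain} --- and to obtain (2)(ii) from a direct estimate on the denominator recurrence. Since the density is bounded between two positive constants on $[0,1)$, ``$\mu_N$-almost every'' and ``Lebesgue-almost every'' are interchangeable throughout.

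For part (1), write $a_k=a_1(T_N^{k-1}x)$ where $a_1(y)=\lfloor N/y\rfloor$, so that $\frac1n\ln\prod_{k=1}^n a_k=\frac1n\sum_{j=0}^{n-1}\ln a_1(T_N^j x)$. Since $a_1(y)\asymp 1/y$ near $0$, we have $\ln a_1\in L^1(\mu_N)$, and the ergodic theorem gives $K_N(x)=\exp\!\big(\int_I\ln a_1\,d\mu_N\big)$ for a.e.\ $x$, a constant depending only on $N$. Slicing $[0,1)$ into the level sets $\{a_1=k\}=(\tfrac{N}{k+1},\tfrac{N}{k}]$ for $k\ge N$ and integrating the density explicitly yields
$$\ln K_N=\frac{1}{\ln(N+1)-\ln N}\sum_{k=N}^{\infty}(\ln k)\,\ln\frac{(k+1)^2}{k(k+2)}.$$
Then $K_N/N\to e$ follows from an elementary estimate: $\ln(1+\tfrac1N)=\tfrac1N\big(1+O(\tfrac1N)\big)$, $\ln\frac{(k+1)^2}{k(k+2)}=\frac1{k^2}\big(1+O(\tfrac1k)\big)$, and $\sum_{k\ge N}\frac{\ln k}{k^2}=\frac{1+\ln N}{N}+O\!\big(\frac{\ln N}{N^2}\big)$ by comparison with $\int_N^\infty\frac{\ln t}{t^2}\,dt=\frac{1+\ln N}{N}$, so that $\ln K_N=1+\ln N+O\!\big(\frac{\ln N}{N}\big)$.

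For part (2)(i) I would first recall the standard algebra of the convergents: with $A_0=0$, $A_1=N$, $B_0=1$, $B_1=a_1$ and $A_n=a_nA_{n-1}+NA_{n-2}$, $B_n=a_nB_{n-1}+NB_{n-2}$, these $A_n,B_n$ are exactly the unreduced numerator and denominator of $[a_1,\dots,a_n]_N$, and $x=\frac{A_{n-1}T_N^nx+A_n}{B_{n-1}T_N^nx+B_n}$ (all of this coming from composing the M\"obius maps $z\mapsto N/(a_k+z)$). An induction on $n$, using $T_N^{k-1}x=N/(a_k+T_N^k x)$, gives the telescoping identity
$$\prod_{j=0}^{n-1}T_N^j x=\frac{N^{\,n}}{B_n+B_{n-1}\,T_N^n x}.$$
Taking logarithms, $\tfrac1n\ln\!\big(B_n+B_{n-1}T_N^n x\big)=\ln N-\tfrac1n\sum_{j=0}^{n-1}\ln(T_N^j x)$; as $\ln x\in L^1(\mu_N)$, the ergodic theorem makes the right-hand side converge a.e.\ to $L_N:=\ln N-\frac{1}{\ln(N+1)-\ln N}\int_0^1\frac{\ln x}{N+x}\,dx$. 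Since $B_{n-1}\le B_n$ and $0\le T_N^n x<1$ give $B_n\le B_n+B_{n-1}T_N^n x<2B_n$, the limit of $\tfrac1n\ln B_n$ is the same, which is the assertion; and \eqref{eqLN} falls out from $\frac1{N+x}=\frac1N\big(1+O(\tfrac1N)\big)$ together with $\int_0^1(-\ln x)\,dx=1$, giving $L_N-\ln N=1+O(\tfrac1N)$.

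Part (2)(ii) is the quickest. Since $a_k\ge N$ for every $k$, the recurrence gives $B_n\ge N(B_{n-1}+B_{n-2})$ for $n\ge2$, with $B_0=1$, $B_1\ge N$; comparing with $r_n=N(r_{n-1}+r_{n-2})$, $r_0=1$, $r_1=N$, we get $B_n\ge r_n$. The roots of $t^2=Nt+N$ are $\gamma=\frac{N+\sqrt{N^2+4N}}{2}$ and $\gamma'=-N/\gamma$ with $|\gamma'|<\gamma$; positivity of $r_n$ forces a positive coefficient on $\gamma^n$, so $\tfrac1n\ln r_n\to\ln\gamma$ and hence $\liminf_n\tfrac1n\ln B_n\ge\ln\gamma$. (For rational $x$ the expansion terminates, so the inequality is to be read for $x$ with an infinite expansion, i.e.\ every irrational $x$.) In all of this the ergodic-theorem reductions, the convergent recurrences, and the linear recurrence are routine once Theorem~\ref{thmain} is available; the only place demanding genuine care is the asymptotic bookkeeping in parts (1) and (2)(i), where the error terms in $\ln(1+\tfrac1N)$ and in $\sum\frac{\ln k}{k^2}$ (respectively $\int_0^1\frac{-\ln x}{N+x}\,dx$) must be controlled uniformly in $N$ to reach the clean limits $e$ and $1$. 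I expect that bookkeeping, rather than any conceptual step, to be the main obstacle.
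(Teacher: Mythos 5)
Your proposal is correct, and it reaches the three statements by a partly different route than the paper. Part (1) is essentially the paper's argument (Birkhoff applied to $\ln\lfloor N/x\rfloor$, as in Proposition~\ref{thcoeff}(3)); for the limit $K_N/N\to e$ you estimate the series $\sum_{k\ge N}\ln k\,\ln\frac{(k+1)^2}{k(k+2)}$ against $\int_N^\infty\frac{\ln t}{t^2}dt$, whereas the paper sandwiches $\frac Nx-1\le\lfloor\frac Nx\rfloor\le\frac Nx$ inside the integral $\int_0^1\frac{\ln\lfloor N/x\rfloor-\ln N}{(N+x)\ln(1+1/N)}dx$ --- equivalent bookkeeping. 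For (2)(i) the paper goes through the Lyapunov exponent: it computes $\lambda(T_N)=\int\ln|T_N'|\,d\mu_N=2\Lambda_N+\ln N$ with the dilogarithm $\Theta$, uses $T_N^n(x)=-\frac{B_nx-A_n}{B_{n-1}x-A_{n-1}}$, and then needs two-sided estimates comparing $|B_nx-A_n|$ with $N^n/B_n$ (Theorem~\ref{thlevy}), obtaining $L_N=\Lambda_N+\ln N$ and proving \eqref{eqLN} via $\lim_{x\to0}\Theta(x)/\ln(1+x)=1$; your telescoping identity $\prod_{j=0}^{n-1}T_N^jx=N^n/(B_n+B_{n-1}T_N^nx)$ (which checks out by induction, with $B_n\le B_n+B_{n-1}T_N^nx<2B_n$) applies Birkhoff to $\ln x$ directly and avoids the liminf/limsup juggling; your constant agrees with the paper's since $\int_0^1\frac{-\ln x}{N+x}dx=\Theta(1/N)$ after integration by parts, and your uniform estimate $\frac1{N+x}=\frac1N(1+O(1/N))$ gives \eqref{eqLN} just as well. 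The genuinely different step is (2)(ii): the paper first proves the pointwise bound $\lambda_N^-(x)\ge2\ln\frac{\sqrt{N+4}+\sqrt N}{2}$ by a pairing argument on consecutive iterates and then feeds it into \eqref{genest1}, while you compare $B_n\ge N(B_{n-1}+B_{n-2})$ with the linear recurrence whose dominant root is $\frac{N+\sqrt{N^2+4N}}{2}$ --- a shorter, purely elementary proof of the same sharp bound (attained when all $a_k=N$); what the paper's detour buys is the independent statement about Lyapunov exponents of non-regular orbits. Your parenthetical restriction to $x$ with infinite expansion (irrational $x$) is the right reading of ``every $x\in I$'', a caveat the paper itself glosses over.
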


\section{The Invariant Measure}
In this section we try to give an idea about how we find the measures
$\mu_N$.


We assume that $T_N$ has an absolutely continuous invariant measure
$\mu_N$ with continuous density. Then $F(t)=\mu_N([0,t))$ is a smooth function on $[0,1)$. As $\mu_N$
is invariant, we have for every $t\in[0,1)$,
$$\mu_N((0,t))=\mu_N(T_N^{-1}((0,t)))=\sum_{k=p}^\infty\mu_N((\frac{N}{k+t},\frac{N}{k})),$$
i.e.
\begin{equation}\label{Fsum}
F(t)=\sum_{k=N}^\infty(F(\frac{N}{k})-F(\frac{N}{k+t})).
\end{equation}
We assume that $F$ extends to a smooth function on $\mathbb{R}^+\cup\{0\}$
such that
(\ref{Fsum}) holds for all $t\in\mathbb{R}^+\cup\{0\}$. Replace $t$ by $1+t$ in (\ref{Fsum}) and note that $F(0)=0$,
we have
\begin{equation*}\label{Feq}
F(1+t)=\sum_{k=N}^\infty(F(\frac{N}{k})-F(\frac{N}{k+1+t}))=F(t)+F(\frac{N}{N+t}).
\end{equation*}
Let $F(t)=G(1+\cfrac tN)$ for all $t$. Then
$$G(1+\frac{1+t}{N})-G(1+\frac tN)=G(1+\frac{1}{N+t}).$$
Let $G(t)=H(\ln t)$ for all $t$. Then
\begin{equation}\label{Ffinal}
H(\ln(N+1+t)-\ln N)-H(\ln(N+t)-\ln N)=H(\ln(N+1+t)-\ln(N+t)).
\end{equation}
Then we are aware that (\ref{Ffinal}) holds for all linear
functions $H$ that satisfies
$$H(x)-H(y)=H(x-y)\text{ for all }x,y\in\mathbb{R}.$$
So we guess that $H(x)=cx$ for some constant $c\in\mathbb{R}$.
Then $$G(t)=H(\ln t)=c\ln t$$ and 
$$F(t)=G(1+\frac tN)=c\ln(1+\frac tN)=c(\ln(t+N)-\ln N).$$
As $F(1)=\mu_N([0,1))=1$, we have 
$$c=\frac1{\ln (N+1)-\ln N}.$$
Hence for every $x\in[0,1)$,
$$d\mu_N(x)=F'(x)dm(x)=\frac1{\ln (N+1)-\ln N}\cdot\frac 1{N+x}dm(x).$$
It is straightforward to check that $\mu_N$ is indeed
an invariant measure for $T_N$.

\section{Ergodicity of $\mu_N$}\label{erg}
Existence of an absolutely continuous
invariant measure for $T_N$,
as well as ergodicity of the measure, is actually guaranteed by a theorem
of Bowen \cite{Bowen}.
Here we would like to present a proof of ergodicity of $\mu_N$ following \cite[Section 2]{CRN}
to get some insight into the $N$-continued fractions.
First we note the recursive structure of the convergents:

\begin{lemma}\label{lemmapnum}(cf. \cite{JT})
 Let $x=[a_1,a_2,\cdots]_N$ and $\cfrac{A_n}{B_n}=[a_1.a_2,\cdots, a_n]_N$ be the $n$-th convergent of $x$ (assuming no reduction of fraction
is made). For $a_n\ne\infty$,
$$\begin{cases}
A_n=a_nA_{n-1}+pA_{n-2}\\
B_n=a_nB_{n-1}+pB_{n-2}
\end{cases}\text{ and }A_{n-1}B_n-A_nB_{n-1}=(-N)^n.$$
\end{lemma}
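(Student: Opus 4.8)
The plan is to prove the recursion and the determinant identity together by a single induction on $n$, using the standard device of letting the last partial quotient be an arbitrary positive real number. Fix once and for all the initial data $A_{-1}=1$, $A_0=0$, $B_{-1}=0$, $B_0=1$; then the displayed recursion already gives $A_1=N$, $B_1=a_1$, in agreement with $[a_1]_N=N/a_1$. Here "without reduction" is to be read as: $A_n,B_n$ are by definition the values produced by the recursion, so the content of the lemma is that $A_n/B_n$ is the $n$-th convergent and that $A_{n-1}B_n-A_nB_{n-1}=(-N)^n$. Since $a_k\ge N\ge 1$ for every $k$, each $B_n$ is a strictly positive integer, so no denominator ever vanishes; the hypothesis $a_n\ne\infty$ merely restricts attention to indices before the expansion of $x$ terminates (which happens only for rational $x$).

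First I would prove the parametrized version: for every $n\ge 1$ and every real $t>0$,
\[
[a_1,\dots,a_{n-1},t]_N=\frac{t\,A_{n-1}+N\,A_{n-2}}{t\,B_{n-1}+N\,B_{n-2}} .
\]
The base case $n=1$ is the identity $[t]_N=N/t$ read off the definition. For the inductive step ($n\ge 2$), combine two observations: the folding identity $[a_1,\dots,a_{n-1},t]_N=[a_1,\dots,a_{n-2},\,a_{n-1}+\tfrac Nt]_N$, which is immediate from \eqref{eq:gcf}; and the inductive hypothesis applied with the real parameter $a_{n-1}+\tfrac Nt$ in the last slot (legitimate since $A_{n-2},A_{n-3},B_{n-2},B_{n-3}$ involve only $a_1,\dots,a_{n-2}$). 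Collecting terms with the help of $a_{n-1}A_{n-2}+N A_{n-3}=A_{n-1}$ and $a_{n-1}B_{n-2}+N B_{n-3}=B_{n-1}$ (the recursion at index $n-1$) gives the claim. Specializing $t=a_n$ yields the recursion for $A_n$ and $B_n$, and in particular $A_n/B_n=[a_1,\dots,a_n]_N$.

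The determinant identity now follows by plugging the recursion into $A_{n-1}B_n-A_nB_{n-1}$:
\[
A_{n-1}B_n-A_nB_{n-1}=A_{n-1}\bigl(a_nB_{n-1}+NB_{n-2}\bigr)-\bigl(a_nA_{n-1}+NA_{n-2}\bigr)B_{n-1}=-N\bigl(A_{n-2}B_{n-1}-A_{n-1}B_{n-2}\bigr),
\]
so that this quantity acquires a factor $-N$ whenever $n$ increases by one; since $A_0B_1-A_1B_0=-N$, induction gives $A_{n-1}B_n-A_nB_{n-1}=(-N)^n$ for all $n\ge 1$.

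I do not expect a genuine obstacle: the statement is classical (cf. \cite{JT}) and the work is bookkeeping. The one point deserving care is to organize the induction around the real-parameter version of the last partial quotient — rather than trying to make literal sense of "$A_n,B_n$ with $a_n$ replaced by a non-integer", which conflicts with the no-reduction normalization — and to state the initial conditions explicitly so that the base cases of both inductions are transparent.
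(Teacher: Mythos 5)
Your proof is correct. Note that the paper itself offers no proof of Lemma \ref{lemmapnum}: it is stated with a bare citation to \cite{JT}, so there is no internal argument to compare against. Your route --- defining $A_n,B_n$ by the recursion with the initial data $A_{-1}=1$, $A_0=0$, $B_{-1}=0$, $B_0=1$, proving the real-parameter identity $[a_1,\dots,a_{n-1},t]_N=\dfrac{tA_{n-1}+NA_{n-2}}{tB_{n-1}+NB_{n-2}}$ by induction via the folding step $a_{n-1}+\tfrac Nt$, and then reading off both the recursion (at $t=a_n$) and the determinant identity from $D_n=-N\,D_{n-1}$, $D_1=-N$ --- is the standard continuant argument and is exactly the form in which the paper actually uses the lemma later: the ergodicity proof writes $y=[a_1,\dots,a_{2n-1},x+a_{2n}]_N=\frac{rx+r'}{qx+q'}$ and uses $q'r-r'q=N^{2n}$, and Theorem \ref{thlevy} uses $x=\frac{A_{n-1}T_N^n(x)+A_n}{B_{n-1}T_N^n(x)+B_n}$, both of which are instances of your parametrized identity. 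Two small points you handled appropriately: the ``$p$'' in the displayed recursion of the lemma is a typo for $N$ (the paper has the same slip elsewhere), and pinning down ``without reduction'' as ``$A_n,B_n$ are the values produced by the recursion'' is the right reading, consistent with the paper's remark that $B_n\ge N^n$ and with the sign convention $A_0B_1-A_1B_0=-N$, which your base case matches.
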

\begin{remark}
As $a_n\ge N$ for all $n$, we can see that $B_n\ge N^n$ be induction.
\end{remark}
 As $\mu_N$ is
absolutely continuous with nonzero density, its ergodicity is equivalent
to the following fact:
\begin{theorem}
Let $E$ be a Borel subset of $[0,1)$ such that $E=T_N^{-1}(E)$ and $m(E)=d<1$. Then $m(E)=0$.
\end{theorem}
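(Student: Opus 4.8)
The plan is to run the classical density (Knopp-type) argument over cylinder sets, following the same scheme as \cite{CRN}. Write $\Delta(a_1,\dots,a_n)$ for the rank-$n$ cylinder $\{x\in I:a_i(x)=a_i,\ 1\le i\le n\}$; these are intervals and, up to the (countable, hence null) set of rationals, they partition $I$. On each such cylinder $T_N^n$ is a monotone bijection onto $[0,1)$, and the first step is to write down its inverse branch $\psi_n=(T_N^n|_{\Delta(a_1,\dots,a_n)})^{-1}$ explicitly. Substituting the tail $N/y$ for $a_{n+1}$ in the convergent recursion of Lemma~\ref{lemmapnum}, one finds that if $y=T_N^n(x)$ and $x\in\Delta(a_1,\dots,a_n)$ then
$$x=\psi_n(y)=\frac{A_n+yA_{n-1}}{B_n+yB_{n-1}} ,$$
and differentiating, together with the identity $A_{n-1}B_n-A_nB_{n-1}=(-N)^n$ of Lemma~\ref{lemmapnum}, gives $|\psi_n'(y)|=N^n/(B_n+yB_{n-1})^2$.

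Next I would establish bounded distortion. Since $a_n\ge N\ge 1$, the recursion forces $B_{n-1}\le B_n$, so $B_n\le B_n+yB_{n-1}\le 2B_n$ for $y\in[0,1)$, whence
$$\frac{N^n}{4B_n^2}\le|\psi_n'(y)|\le\frac{N^n}{B_n^2}\qquad(y\in[0,1)),$$
i.e. the distortion of $\psi_n$ on $[0,1)$ is at most $4$. Now use the invariance: $E=T_N^{-1}(E)$ gives $E=T_N^{-n}(E)$, hence for every rank-$n$ cylinder $\Delta$ one has $E\cap\Delta=\psi_n(E)$, and by the change of variables $x=\psi_n(y)$,
$$\frac{m(E\cap\Delta)}{m(\Delta)}=\frac{\int_E|\psi_n'|\,dm}{\int_{[0,1)}|\psi_n'|\,dm}\ge\frac{\inf|\psi_n'|}{\sup|\psi_n'|}\,m(E)\ge\frac{d}{4}.$$
So, provided $d>0$, the set $E$ has density at least the fixed positive constant $d/4$ inside \emph{every} cylinder.

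To finish, I would invoke the Lebesgue density theorem. Assume $d>0$ and suppose, for contradiction, $m(E)=d<1$, so $m(E^c)>0$. Pick a density point $x_0$ of $E^c$; deleting a null set we may assume $x_0$ is irrational, so its cylinders $\Delta_n(x_0)$, $n\ge 1$, are defined, nested, and have length $\ell_n=m(\Delta_n(x_0))\le N^n/B_n^2\le N^{-n}\to0$ (using $B_n\ge N^n$). Since $x_0\in\Delta_n(x_0)$, we have $\Delta_n(x_0)\subseteq[x_0-\ell_n,x_0+\ell_n]$, so the density-point property yields $m(E\cap\Delta_n(x_0))\le m\bigl(E\cap[x_0-\ell_n,x_0+\ell_n]\bigr)=o(\ell_n)$, i.e. $m(E\cap\Delta_n(x_0))/m(\Delta_n(x_0))\to0$. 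This contradicts the lower bound $d/4$ from the previous step; hence $m(E^c)=0$, contradicting $d<1$. Therefore $d=0$.

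I expect the last paragraph to be the only genuinely delicate point: one must check that the nested cylinders really do shrink to $x_0$ (this is exactly why the bound $B_n\ge N^n$, hence $\ell_n\to0$, is needed) and that the density theorem applies to these intervals even though $x_0$ sits at an arbitrary position inside them — which works precisely because $x_0\in\Delta_n(x_0)$ traps $\Delta_n(x_0)$ inside a symmetric interval of length $2\ell_n$. The remaining ingredients — the Möbius form of $\psi_n$, the distortion bound, and $E\cap\Delta=\psi_n(E)$ — are routine consequences of Lemma~\ref{lemmapnum} and the invariance of $E$.
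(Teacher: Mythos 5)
Your proof is correct and follows essentially the same route as the paper's: the paper likewise realizes the rank-$2n$ cylinder as the interval $[t_1,t_2]$ parametrized by the M\"obius inverse branch, uses the identity $A_{n-1}B_n-A_nB_{n-1}=(-N)^n$ to compare $m(E\cap\Delta)$ with $m(\Delta)$ (obtaining the upper bound $2d/(1+d)<1$ for the density of $E$ via monotonicity of $1/(qx+q')^2$, where you obtain the mirror-image lower bound $d/4$ via bounded distortion), and finishes with the same Vitali/Lebesgue density-point argument. One small repair: your chain $m(\Delta_n(x_0))\le N^n/B_n^2\le N^{-n}$ does not show the cylinders shrink when $N=1$; instead use $N^n/B_n^2\le 1/B_n$ (since $B_n\ge N^n$) together with $B_n\ge a_nB_{n-1}+NB_{n-2}\ge B_{n-1}+B_{n-2}\to\infty$, which settles every $N\ge1$ (the paper's own estimate $t_2-t_1\le N^{-2n}$ has the same blemish).
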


\begin{proof}Let $E$ be an invariant subset and $m(E)=d<1$. Let $\chi$ be the characteristic function
on $E$. We choose $\xi\in (0,1)$ and write
$$\xi=[a_1,a_2,\cdots]_N.$$
We fix a positive integer $n$ and denote by $\cfrac{r}{q}$ and $\cfrac{r'}{q'}$
the $(2n-1)$-th and $2n$-th convergents of $\xi$ (without reduction), i.e.
$$\frac{r}{q}=[a_1,a_2,\cdots,a_{2n-1}]_N, \frac{r'}{q'}=[a_1, a_2,\cdots
a_{2n}]_N.$$
Let
\begin{equation*}\label{eqTn}
y=[a_1,a_2,\cdots, a_{2n-1},x+a_{2n}]_N=
\frac{\frac Nx r'+Nr}{\cfrac Nxq'+Nq}=
\frac{rx+r'}{qx+q'}.
\end{equation*}
We have $T_N^{2n}(y)=x$. So 
$\chi(x)=\chi(\dfrac{rx+r'}{qx+q'})$
for every
$x\in[0,1)$.

Let $t_1=\cfrac{r'}{q'}$, $t_2=\cfrac{r+r'}{q+q'}$. Then 
$$t_{2}-t_1=\frac{q'(r+r')-r'(q+q')}{q'(q+q')}=\frac{q'r-r'q}{q'(q+q')}=\frac{N^{2n}}{q'(q+q')}\le\frac{1}{N^{2n}}.$$
We have
\begin{align*}
\frac{m(E\cap[t_1,t_2])}{m([t_1,t_2])}=&\frac{q'(q+q')}{N^{2n}}\int_{t_1}^{t_2}\chi(y)dy\\
=&\frac{q'(q+q')}{N^{2n}}\int_0^1\chi(\frac{rx+r'}{qx+q'})d(\frac{rx+r'}{qx+q'})\\
=&\frac{q'(q+q')}{N^{2n}}\int_0^1\chi(x)\cdot\frac{q'r-r'q}{(qx+q')^2}dx\\
=&q'(q+q')\int_0^1\chi(x)\cdot\frac{dx}{(qx+q')^2}.
\end{align*}

As $m(E)=d<1$ and $\cfrac1{(qx+q')^2}$ is decreasing with $x$, we have
\begin{align*}
\frac{m(E\cap[t_1,t_2])}{m([t_1,t_2])}
=&q'(q+q')\int_0^1\chi(x)\cdot\frac{dx}{(qx+q')^2}\\
\le&q'(q+q')\int_0^d\frac{dx}{(qx+q')^2}\\
=&q'(q+q')(\frac{1}{qq'}-\frac{1}{q(qd+q')})\\
=&1-\frac{q'(1-d)}{dq+q'}\\
\le&1-\frac{1-d}{1+d}\\
=&C<1.
\end{align*}
If $\xi$ runs over $(0,1)$ and $n$ runs over all positive integers, the intervals
$[t_1,t_2]$ form a covering, in the sense of Vitali, of $(0,1)$. In the virtue
of Lebesgue density theorem, $m(E)=0$.
\end{proof}

\section{Frequencies and Means}
The invariant measure $\mu_N$ allows us to compute the frequencies
and means of the coefficients (partial denominators)  for  $N$-continued fractions.

\begin{theorem}\label{thcal}
Let $\phi$ be a measurable function on $I$ that is integrable over $\mu_N$. Then
for Lebesgue almost every $x\in I$,
$$\lim_{n\to\infty}\frac1n\sum_{k=0}^{n-1}\phi(T_N^k(x))=\int_{I}\phi d\mu_N=\frac1{\ln (N+1)-\ln N}\int_0^1\frac {\phi(t)}{N+t}dt.$$
\end{theorem}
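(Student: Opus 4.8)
The plan is to obtain this statement as a direct consequence of the Birkhoff pointwise ergodic theorem, once ergodicity has been established. Recall that by Theorem~\ref{thmain} the measure $\mu_N$ is $T_N$-invariant, and by the argument in Section~\ref{erg} the system $(I,T_N,\mu_N)$ is ergodic. Since $\phi\in L^1(\mu_N)$ by hypothesis, Birkhoff's theorem applies and produces a $T_N$-invariant Borel set $I_0\subseteq I$ with $\mu_N(I\setminus I_0)=0$ such that
$$\lim_{n\to\infty}\frac1n\sum_{k=0}^{n-1}\phi(T_N^k(x))=\int_I\phi\,d\mu_N\qquad\text{for every }x\in I_0.$$

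The second step is to upgrade the conclusion from ``$\mu_N$-almost every $x$'' to ``Lebesgue-almost every $x$''. This is immediate from the explicit density in Theorem~\ref{thmain}: on $I=[0,1)$ the function $x\mapsto\frac{1}{(\ln(N+1)-\ln N)(N+x)}$ is bounded below by $\frac{1}{(\ln(N+1)-\ln N)(N+1)}$ and above by $\frac{1}{(\ln(N+1)-\ln N)N}$, so $\mu_N$ and Lebesgue measure $m$ are mutually absolutely continuous (in fact comparable up to a multiplicative constant). Hence $\mu_N(I\setminus I_0)=0$ forces $m(I\setminus I_0)=0$, and the ergodic average converges to $\int_I\phi\,d\mu_N$ for $m$-a.e.\ $x$. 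Finally, substituting the density from Theorem~\ref{thmain} gives
$$\int_I\phi\,d\mu_N=\frac1{\ln(N+1)-\ln N}\int_0^1\frac{\phi(t)}{N+t}\,dt,$$
which is the asserted value.

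There is no real obstacle here; the proof is a routine packaging of Birkhoff's theorem with the ergodicity proved in Section~\ref{erg}. The only points requiring a word of care are verifying that the hypothesis ``$\phi$ integrable over $\mu_N$'' is exactly the integrability condition needed to invoke Birkhoff's theorem, and observing the equivalence of $\mu_N$ with $m$ so that the exceptional set supplied by the ergodic theorem is genuinely Lebesgue-null.
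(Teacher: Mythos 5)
Your proposal is correct and follows exactly the paper's argument: apply Birkhoff's ergodic theorem to the ergodic invariant measure $\mu_N$, then pass from $\mu_N$-a.e.\ to Lebesgue-a.e.\ using the equivalence of $\mu_N$ and $m$ coming from the bounded, strictly positive density, and finally substitute the density to express the integral. You merely spell out the details that the paper's one-line proof leaves implicit.
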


\begin{proof} Just apply Birkhoff ergodic theorem to the map $T_N$ and the
ergodic measure $\mu_N$. Note that $\mu_N(E)=0$ if and only if $m(E)=0$.
\end{proof}

\begin{remark} The theorem also holds for nonnegative
functions with infinite integrals.
\end{remark}

We can apply theorem \ref{thcal} to get the following results.
\begin{proposition}\label{thcoeff} For Lebesgue almost every $x\in I$, the coefficients
in $N$-continued fractions of $x=[a_1,a_2,\cdots]_N$
satisfy:
\begin{enumerate}
\item For every positive integer $M\ge N$,
 the frequency (limit distribution) of $M$ in the sequence $\{a_k\}$ is
 $$V_N(M)=\lim_{n\to\infty}\frac{|\{k|a_k=M, 1\le k\le n\}|}n=\frac{2\ln(M+1)-\ln
 M-\ln(M+2)}{\ln (N+1)-\ln N}.$$
 \item The expectation of the coefficients is infinity:
 $$\lim_{n\to\infty}\frac1n\sum_{k=1}^n a_k=\infty.$$
 \item The geometric mean of the
 coefficients is
 $$K_N=K_N(x)=\lim_{n\to\infty}\sqrt[n]{\prod_{k=1}^n a_k}=\prod_{k=N}^\infty(\frac{(k+1)^2}{k(k+2)})^{\frac{\ln
 k}{\ln(N+1)-\ln N}}.$$
\end{enumerate}
\end{proposition}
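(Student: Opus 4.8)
The plan is to realize each of the three quantities as a Birkhoff average $\frac1n\sum_{k=0}^{n-1}\phi(T_N^k(x))$ for an appropriate function $\phi$ on $I$ and then invoke Theorem \ref{thcal}. The first thing I would record is that the $k$-th coefficient depends only on position in $I$: setting $\psi(x)=\lfloor N/x\rfloor$, Definition \ref{def:gcf} gives $a_1(x)=\psi(x)$ and hence $a_k(x)=\psi(T_N^{k-1}(x))$; moreover $\psi$ equals the constant $M$ exactly on $J_M:=(\tfrac N{M+1},\tfrac NM]\cap I$ for each integer $M\ge N$, and the $J_M$ partition $I$ up to a null set. This partition is the device that will convert $\mu_N$-integrals into the series in the statement.

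For part (1) I would apply Theorem \ref{thcal} to $\phi=\chi_{J_M}$: since $|\{k\le n:a_k=M\}|=\sum_{k=0}^{n-1}\chi_{J_M}(T_N^k(x))$, the limit frequency equals $\mu_N(J_M)$ for a.e.\ $x$, and then I would compute $\mu_N(J_M)=\frac1{\ln(N+1)-\ln N}\int_{N/(M+1)}^{N/M}\frac{dt}{N+t}$; using $N+\tfrac NM=\tfrac{N(M+1)}M$ and $N+\tfrac N{M+1}=\tfrac{N(M+2)}{M+1}$, this collapses to the stated expression (the computation also works at $M=N$, where $J_N=(\tfrac N{N+1},1)$). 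For part (2) I would apply the theorem in the form of the Remark allowing nonnegative functions with infinite integral, to $\psi$ itself: because $\psi(x)>\tfrac Nx-1$ and $\int_0^1\tfrac{N/x-1}{N+x}\,dm(x)=\infty$ (the integrand is $\sim1/x$ near $0$), the average $\frac1n\sum_{k=1}^na_k$ must diverge to $+\infty$ a.e.; a truncation variant, bounding below by $\int_I\min(\psi,L)\,d\mu_N$ and letting $L\to\infty$, would work equally well.

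For part (3) the main preliminary step is to check $\ln\psi\in L^1(\mu_N)$; this holds because $0\le\ln N\le\ln\psi(x)\le\ln N-\ln x$ and $-\ln x$ is integrable against the bounded density $\frac1{N+x}$ on $[0,1]$. Theorem \ref{thcal} then gives $\lim_n\frac1n\ln\prod_{k=1}^na_k=\int_I\ln\psi\,d\mu_N$ for a.e.\ $x$, a constant depending only on $N$. To match the product formula I would decompose $\ln\psi=\sum_{M\ge N}(\ln M)\chi_{J_M}$, integrate term by term (legitimate by nonnegativity), and substitute the value of $\mu_N(J_M)$ from part (1), obtaining $\int_I\ln\psi\,d\mu_N=\sum_{M=N}^\infty\frac{\ln M\cdot(2\ln(M+1)-\ln M-\ln(M+2))}{\ln(N+1)-\ln N}=\ln K_N$; exponentiating gives the claim.

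The calculations here are all routine, so I do not expect a serious obstacle; the only points needing genuine attention are the integrability $\ln\psi\in L^1(\mu_N)$ (without it the ergodic theorem would not give a finite, $x$-independent limit in (3)) and the convergence of the series defining $\ln K_N$, which I would control via $2\ln(M+1)-\ln M-\ln(M+2)=-\ln\bigl(1-\tfrac1{(M+1)^2}\bigr)=O(M^{-2})$, making the $M$-th term $O(M^{-2}\ln M)$, together with the justified interchange of $\sum$ and $\int$ over the countable partition $\{J_M\}$.
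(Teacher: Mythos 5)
Your proposal is correct and follows essentially the same route as the paper: apply Theorem \ref{thcal} with $\phi$ equal to the indicator of the cylinder $(\tfrac N{M+1},\tfrac NM]$ for (1), to $a_1$ (using the remark on nonnegative functions with infinite integral) for (2), and to $\ln a_1$ for (3), then compute the $\mu_N$-integrals. The extra details you supply (integrability of $\ln\lfloor N/x\rfloor$ and convergence of the series via the $O(M^{-2}\ln M)$ bound) are exactly the routine checks the paper leaves implicit.
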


\begin{proof}
(1) can be obtained by setting $\phi$ as the characteristic function on $[\dfrac{N}{M+1},\dfrac{N}{M})$
in Theorem \ref{thcal}, i.e.
\begin{align*}
V_N(M)=\frac1{\ln (N+1)-\ln N}\int_{\frac N{M+1}}^{\frac pM}\frac {1}{N+t}dt=\frac{2\ln(M+1)-\ln
 M-\ln(M+2)}{\ln (N+1)-\ln N}.
\end{align*}
(2)(3) can be derived from (1) with the frequencies or  by setting $\phi$ in Theorem \ref{thcal} as $\phi([a_1, a_2,\cdots]_{N})=a_1$ and
$\phi([a_1, a_2,\cdots]_N)=\ln a_1$ separately.
\end{proof}

As a corollary, we note that the relative
 frequencies are the same for different values of $N$ as long as
they do not
vanish, i.e., For
any integers $M_1$, $M_2$, for all $N\le\min\{M_1, M_2\}$,
$$\frac{V_N(M_1)}{V_N(M_2)}=\frac{2\ln(M_1+1)-\ln
 M_1-\ln(M_1+2)}{2\ln(M_2+1)-\ln
 M_2-\ln(M_2+2)}=C(M_1, M_2),$$
which is independent of $N$.

The frequencies also implies the following fact:
\begin{corollary}
Let $h:\mathbb{Z}^+\to\mathbb{R}$ be a function on positive integers. Let
$$\phi_N([a_1,a_2,\cdots]_N)=h(a_1).$$
Then $\phi_1$ is integrable over $\mu_1$ if and only if $\phi_N$ is integrable
over $\mu_N$ for all $N$. 
\end{corollary}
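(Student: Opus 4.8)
The plan is to reduce the integrability of $\phi_N$ over $\mu_N$ to the convergence of a single series whose general term does not depend on $N$, and then to observe that such tail conditions are equivalent for all $N$.

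First I would use the partition of $I$ on which $a_1$ is constant. Since $a_1([a_1,a_2,\dots]_N)=M$ precisely when $x\in I_{N,M}:=[\frac{N}{M+1},\frac{N}{M})$, the intervals $\{I_{N,M}:M\ge N\}$ partition $I$, and $\phi_N\equiv h(M)$ on $I_{N,M}$. Hence
\[
\int_I|\phi_N|\,d\mu_N=\sum_{M=N}^{\infty}|h(M)|\,\mu_N(I_{N,M}).
\]
Next I would evaluate $\mu_N(I_{N,M})$ exactly as in the proof of Proposition \ref{thcoeff}(1):
\[
\mu_N(I_{N,M})=\frac{1}{\ln(N+1)-\ln N}\int_{N/(M+1)}^{N/M}\frac{dt}{N+t}=\frac{w(M)}{\ln(N+1)-\ln N},
\]
where $w(M):=2\ln(M+1)-\ln M-\ln(M+2)=\ln\frac{(M+1)^2}{M(M+2)}$. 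The crucial point is that $w(M)$ is a fixed positive real number for each $M$, independent of $N$; only the prefactor $\frac{1}{\ln(N+1)-\ln N}$, which is a finite positive constant, depends on $N$. Consequently, $\phi_N$ is integrable over $\mu_N$ if and only if $\sum_{M=N}^{\infty}|h(M)|\,w(M)<\infty$.

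Finally I would compare these conditions across $N$. For any $N$ the series $\sum_{M=N}^{\infty}|h(M)|\,w(M)$ and $\sum_{M=1}^{\infty}|h(M)|\,w(M)$ differ only in the finitely many terms indexed by $M=1,\dots,N-1$, each of which is a finite nonnegative real number; so convergence of one is equivalent to convergence of the other. Chaining the equivalences, $\phi_1$ is integrable over $\mu_1$ iff $\sum_{M=1}^\infty|h(M)|\,w(M)<\infty$ iff $\sum_{M=N}^\infty|h(M)|\,w(M)<\infty$ for every $N$ iff $\phi_N$ is integrable over $\mu_N$ for every $N$.

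There is essentially no hard step here; the only points requiring a little care are to work with $|h|$ throughout (since $h$ may change sign, so that integrability means absolute integrability), and to keep in mind that ``integrable over $\mu_N$'' refers to a function on $I$, which is precisely what makes the relevant series a genuine tail of one fixed series, so that passing between different values of $N$ only discards finitely many finite terms. One could additionally note that $w(M)\sim M^{-2}$, so that the common criterion is just $\sum_M |h(M)|/M^2<\infty$, but this asymptotic is not needed for the argument.
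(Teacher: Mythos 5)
Your argument is correct and is exactly the route the paper intends: it states the corollary as a consequence of the frequency computation in Proposition 4.2(1), i.e.\ that $\mu_N(\{a_1=M\})=\frac{2\ln(M+1)-\ln M-\ln(M+2)}{\ln(N+1)-\ln N}$ has an $M$-dependence independent of $N$, which is precisely the decomposition you carry out, reducing integrability to the tail condition $\sum_{M\ge N}|h(M)|\,w(M)<\infty$. Your added care about working with $|h|$ and about the finitely-many-terms comparison across $N$ is exactly the (omitted) content of the paper's one-line justification.
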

This implies that for every function $h:\mathbb{Z}^+\to\mathbb{R}$
, convergence of the mean 
$$\frac{1}{n}\sum_{k=1}^n h(a_k)$$
is the
same for the regular continued fraction and for the generalized
ones.

Proposition \ref{thcoeff}(2)(3) and Theorem \ref{thmaincoeff}(1) are part of
the following general result.

\begin{definition}
For 
$x=[a_1, a_2, \cdots]_N\in I$, we define the following H\"older means of
the coefficients (provided the limits exist):
\begin{align*}
\text{For }r\ne0,\ \ &K_{N,r}(x)=\lim_{n\to\infty}(\frac1n\sum_{j=1}^n a_j^r)^{\frac{1}{r}};
\\
\text{For } r=0, \ \ &K_{N,0}(x)=K_N(x)=\lim_{n\to\infty}\sqrt[n]{\prod_{j=1}^n a_j}=\lim_{r\to
0} K_{N,r}(x).
\end{align*}
\end{definition}

Note that the $N$-continued fraction coincides with the regular one for $N=1$.
The well-known result of Khinchin states that for almost every $x\in I$,
$$K_{1}(x)=K_{1}=2.685452\cdots,$$
where $K_1$ is the so called Khinchin's constant. It is doubtful whether the
author is the first to notice that $K_1$ is quite close to $e$. Meanwhile,
it is not difficult
to find formulae for the means $K_{N,r}$.
However, no one has realized the neat limit behavior of the means. It is fascinating
to find such a brave new world just by changing $1$ to $N$. Our discovery
is as following:

\begin{theorem}\label{thmain}
For Lebesgue almost every $x\in I$ ($x$ should be irrational), we have the following
results:
\begin{enumerate}
\item For $r\ge 1$, $$K_{N,r}(x)=+\infty.$$ 
\item For $r<1$ and $r\ne 0$, $$K_{N,r}:=K_{N,r}(x)
=(\sum_{k=N}^\infty k^r\frac{\ln(1+\frac1k)-\ln(1+\frac1{k+1})}{\ln(1+\frac1N)})^{\frac1r}.$$
$$\lim_{N\to\infty}\frac{K_{N,r}}{N}=(1-r)^{-\frac1r}.$$
\item 
$$K_{N}:=K_{N}(x)=\prod_{k=N}^\infty(\frac{(k+1)^2}{k(k+2)})^{\frac{\ln
 k}{\ln(N+1)-\ln N}}.$$
$$\lim_{N\to\infty}\frac{K_{N}}{N}=e.$$
\end{enumerate}
\end{theorem}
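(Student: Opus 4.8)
The plan is to apply Theorem~\ref{thcal} (the Birkhoff-type ergodic theorem for $T_N$) to suitably chosen observables built from the first coefficient $a_1$, and then to extract the asymptotics in $N$ by dominated/monotone convergence after the change of variables $x = N/a_1 + (\text{fractional part})$. Concretely, for a real exponent $r$ set $\phi_{N,r}([a_1,a_2,\dots]_N) = a_1^r$ when $r\neq 0$ and $\phi_{N,0} = \ln a_1$. Since $a_1 = \lfloor N/x\rfloor$ on $I$, we have
\begin{equation*}
\int_I \phi_{N,r}\,d\mu_N = \frac{1}{\ln(N+1)-\ln N}\sum_{k=N}^{\infty} k^r \int_{N/(k+1)}^{N/k}\frac{dt}{N+t} = \frac{1}{\ln(N+1)-\ln N}\sum_{k=N}^{\infty} k^r\bigl(\ln\tfrac{N+1}{k+1}\dots\bigr),
\end{equation*}
which one simplifies exactly as in the proof of Proposition~\ref{thcoeff}(1) to $\sum_{k=N}^\infty k^r\bigl(\ln(1+\tfrac1k)-\ln(1+\tfrac1{k+1})\bigr)/\ln(1+\tfrac1N)$. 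For $r\ge 1$ the series diverges (the terms are $\asymp k^{r}\cdot k^{-2} = k^{r-2}$, summable iff $r<1$), giving part~(1); for $r<1$, $r\neq 0$ it converges, and Birkhoff's theorem applied to the nonnegative observable $a_1^r$ (using the extension in the Remark after Theorem~\ref{thcal} to cover the divergent case, and ordinary integrability for $r<1$) yields $K_{N,r}(x)^r = \int_I \phi_{N,r}\,d\mu_N$ for a.e.\ $x$, which is part~(2)'s formula. Part~(3) is the $r=0$ statement, already recorded as Proposition~\ref{thcoeff}(3); alternatively it follows by the same computation with $\ln a_1$ in place of $a_1^r$, or as the limit $r\to 0$ of $K_{N,r}$.

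The substantive content is the limit behavior in $N$. For part~(2), write $K_{N,r}^r = \dfrac{1}{\ln(1+1/N)}\sum_{k\ge N}k^r\bigl(\ln(1+\tfrac1k)-\ln(1+\tfrac1{k+1})\bigr)$. The plan is to rescale: put $k = N u$ and compare the sum to an integral. Since $\ln(1+\tfrac1k)-\ln(1+\tfrac1{k+1}) = \tfrac1{k(k+1)} + O(k^{-3})$ and $\ln(1+1/N)\sim 1/N$, one expects
\begin{equation*}
\frac{K_{N,r}^r}{N^r} \;\longrightarrow\; \int_{1}^{\infty} u^r\cdot\frac{du}{u^2} \;=\; \int_1^\infty u^{r-2}\,du \;=\; \frac{1}{1-r},
\end{equation*}
so $K_{N,r}/N \to (1-r)^{-1/r}$. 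Making this rigorous is a Riemann-sum estimate: bound the summand above and below by $k^{r-2}(1+O(1/k))$, use monotonicity of $u\mapsto u^{r-2}$ on $[1,\infty)$ to sandwich $\tfrac1N\sum_{k\ge N}(k/N)^{r-2}$ between $\int$'s over shifted rays, and control the $O$-term by $\tfrac1N\sum_{k\ge N}k^{-1}(k/N)^{r-2} = O(N^{-1}\sum k^{r-3}N^{2-r}) = O(1/N)$ when $r<1$. I would handle $r<0$ and $0<r<1$ uniformly this way since $u^{r-2}$ is integrable on $[1,\infty)$ in both ranges.

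For part~(3) the target is $\lim_N K_N/N = e$, i.e.\ $\lim_N (\ln K_N - \ln N) = 1$. Writing $\ln K_N = \sum_{k\ge N}\tfrac{\ln k}{\ln(1+1/N)}\bigl(2\ln(k+1)-\ln k-\ln(k+2)\bigr)$ and noting $2\ln(k+1)-\ln k - \ln(k+2) = -\ln(1-\tfrac1{(k+1)^2}) = \tfrac1{(k+1)^2}+O(k^{-4})$, one gets $\ln K_N - \ln N = \sum_{k\ge N}\tfrac{\ln(k/N)}{\ln(1+1/N)(k+1)^2} + (\text{error})$; with $k = Nu$ this is a Riemann sum for $\int_1^\infty \ln u\cdot u^{-2}\,du = 1$. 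The same sandwiching argument as in part~(2), now with the monotone-then-decreasing integrand $\ln u / u^2$, closes it; the main care is that $\ln u/u^2$ is not monotone on all of $[1,\infty)$, so I would split at $u = e$ and argue monotonically on each piece, or simply bound $|\,\tfrac1N\sum f(k/N) - \int_1^\infty f\,|$ by the total variation of $f=\ln u/u^2$ times the mesh $1/N$, which is $O(1/N)$. The main obstacle throughout is purely this uniform control of the Riemann-sum error as $N\to\infty$ — the ergodic-theory input is immediate from Theorem~\ref{thcal} — so the heart of the proof is an elementary but careful analysis of the three sums; I would state the sandwich estimate once as a lemma and invoke it three times.
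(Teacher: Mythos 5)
Your proposal is correct, and its ergodic skeleton is exactly the paper's: apply Theorem \ref{thcal} to $\phi=a_1^r=\lfloor N/x\rfloor^r$ (resp.\ $\ln\lfloor N/x\rfloor$), check integrability for $r<1$, and read off the series formulas. The differences are in two places. For part (1), you invoke the Remark after Theorem \ref{thcal} (Birkhoff for nonnegative observables with infinite integral), whereas the paper keeps the step self-contained by truncation: it applies Theorem \ref{thcal} to $g_M(x)=\lfloor N/x\rfloor$ cut off below $N/M$, gets $(K_{N,r}(x))^r\ge\int (g_M)^r\,d\mu_N\ge\int g_M\,d\mu_N$ for every $M$, and lets $M\to\infty$ --- in effect proving the remark in the case needed; if you rely on the remark you should at least note it is itself proved by this truncation. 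For the $N\to\infty$ limits, you stay with the series and do a rescaling/Riemann-sum analysis ($k=Nu$, comparison with $\int_1^\infty u^{r-2}\,du=\frac1{1-r}$ and $\int_1^\infty u^{-2}\ln u\,du=1$), which works but requires exactly the mesh and tail control you flag, including the non-monotonicity of $\ln u/u^2$. The paper instead returns to the integral representation $\int_0^1(\cdot)\,\frac{dx}{(N+x)\ln(1+\frac1N)}$ and sandwiches $\frac Nx-1\le\lfloor \frac Nx\rfloor\le \frac Nx$, which reduces both limits to the explicit elementary integrals $\int_0^1 x^{-r}\,dx=\frac1{1-r}$ and $\int_0^1(-\ln x)\,dx=1$ with no Riemann-sum error estimates at all; this is shorter and uniform in $r$. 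Either route closes the argument; if you keep yours, the single sandwich lemma you propose (monotone or bounded-variation integrand on $[1,\infty)$, error $O(1/N)$) is indeed the only missing ingredient.
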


\begin{proof}

It is easy to see that there is $\Gamma\subset I$ such that
$m(\Gamma)=1$ and Theorem \ref{thcal} holds for every $x\in\Gamma$.
\begin{enumerate}

\item Assume that $r\ge 1$ and $M$ is a positive integer no less than $N$, let
$$g_{M}(x)=\begin{cases}\lfloor\dfrac Nx\rfloor, & x\ge\dfrac NM;\\
0, & \text{otherwise.} \end{cases}$$
Then $(g_M)^r\in L^1(\mu_N)$ as
\begin{align*}\int (g_M)^rd\mu_N=&\sum_{k=N}^M k^r\mu_N((\frac N{k+1},\frac Nk])\\
=&\sum_{k=N}^M k^r\frac{\ln(1+\frac1k)-\ln(1+\frac1{k+1})}{\ln(1+\frac1N)}<\infty.
\end{align*}
Apply Theorem \ref{thcal} for $\phi=(g_M)^{r}$. For every $x\in\Gamma$, we have
\begin{align*}
(K_{N,r}(x))^r=&\lim_{n\to\infty}\frac1n\sum_{j=1}^n a_j^r
\ge\lim_{n\to\infty}\frac1n\sum_{j=1}^n a_j^r\chi_M(a_j)
\\=&\lim_{n\to\infty}\frac1n\sum_{j=0}^{n-1} (g_M(T_N^j))^r=\int (g_M)^r d\mu_N
\end{align*}
for all $M$, where $\chi_M(t)=\begin{cases}1, & t\le M;\\ 0, & t>M.\end{cases}$
But for $r\ge 1$, we have
\begin{align*}
(K_{N,r}(x))^r\ge&\int (g_M)^r d\mu_N\ge\int g_M d\mu_N\\=&\sum_{k=N}^M k\frac{\ln(1+\frac1k)-\ln(1+\frac1{k+1})}{\ln(1+\frac1N)}
\\=&N-M\frac{\ln(1+\frac 1{M+1})}{\ln(1+\frac1N)}+\sum_{k=N+1}^M \frac{\ln(1+\frac1k)}{\ln(1+\frac1N)}
\\=&N-M\frac{\ln(1+\frac 1{M+1})}{\ln(1+\frac1N)}+\frac{\ln(M+1)-\ln(N+1)}{\ln(1+\frac1N)}
\\ \to&\infty \text{ as }M\to\infty.
\end{align*}
So $K_{N,r}(x)=+\infty$ for $r\ge 1$ and for every $x\in\Gamma$.
\item Assume that $r<1$ and $r\ne 0$. Let $g(x)=\lfloor \dfrac Nx\rfloor$ for $x\in (0,1)$. 
Note that $\ln(1+t)\le t$ for all $t\ge 0$. For $r<1$, $(g)^r\in L^1(\mu_N)$ as
\begin{align*}
\int(g)^rd\mu_N=&\sum_{k=N}^\infty k^r\frac{\ln(1+\frac1k)-\ln(1+\frac1{k+1})}{\ln(1+\frac1N)}
\\=&\frac{1}{\ln(1+\frac1N)}\sum_{k=N}^\infty k^r\ln(1+\frac1{k(k+2)})
\\\le&\frac{1}{\ln(1+\frac1N)}\sum_{k=N}^\infty \frac{k^r}{k^2}<\infty.
\end{align*}
Apply Theorem \ref{thcal} for $\phi=(g)^{r}$. For every $x\in\Gamma$, we have
\begin{align*}
K_{N,r}(x)=&(\lim_{n\to\infty}\frac1n\sum_{j=1}^n a_j^r)^{\frac1r}
=(\lim_{n\to\infty}\frac1n\sum_{j=0}^{n-1} (g(T_N^j))^r)^{\frac1r}
\\=&(\int (g)^r d\mu_N)^{\frac1r}=K_{N,r}.
\end{align*}

Now we assume that $r<0$. Proof for the case $0<r<1$ is analogous. For $r<0$,
we have
\begin{align*}
\frac{K_{N,r}}{N}=&\frac1N(\int_0^1(\lfloor\frac Nx\rfloor)^rd\mu_N)^{\frac1r}
\\\ge&\frac1N(\int_0^1(\frac Nx)^r\frac{1}{(N+x)\ln(1+\frac1N)}dx)^{\frac1r}
\\\ge&(\frac{1}{(N+1)\ln(1+\frac1N)}\int_0^1 x^{-r}dx)^{\frac1r}
\\\to& (1-r)^{-\frac1r}\text{ as }N\to\infty,
\end{align*}
and
\begin{align*}
\frac{K_{N,r}}{N}
\le&\frac1N(\int_0^1(\frac Nx-1)^r\frac{1}{(N+x)\ln(1+\frac1N)}dx)^{\frac1r}
\\\le&\frac{N-1}{N}(\frac{1}{N\ln(1+\frac1N)}\int_0^1 x^{-r}dx)^{\frac1r}
\\\to& (1-r)^{-\frac1r}\text{ as }N\to\infty.
\end{align*}
Hence
$$\lim_{N\to\infty}\frac{K_{N,r}}{N}=(1-r)^{-\frac1r}.$$
\item Let $h(x)=\ln\lfloor\dfrac{N}{x}\rfloor$ for $x\in(0,1)$. 
Then $h\in L^1(\mu_N)$ as
\begin{align*}
\int h d\mu_N=&\sum_{k=N}^\infty \ln k\frac{\ln(1+\frac1k)-\ln(1+\frac1{k+1})}{\ln(1+\frac1N)}
\\\le&\frac{1}{\ln(1+\frac1N)}\sum_{k=N}^\infty \frac{\ln k}{k^2}<\infty.
\end{align*}
Apply Theorem \ref{thcal} for $\phi=h$. For every $x\in\Gamma$, we have
\begin{align*}
\ln K_{N}(x)=&\lim_{n\to\infty}\frac1n\sum_{j=1}^n \ln a_j
=\lim_{n\to\infty}\frac1n\sum_{j=0}^{n-1} h(T_N^j)
\\=&(\int h d\mu_N)^{\frac1r}=\ln K_{N}.
\end{align*}

Moreover, we have
$$\ln K_{N}-\ln N=\int(\ln \lfloor \frac Nx\rfloor-\ln N) d\mu_N=\int_0^1
\frac{\ln \lfloor \dfrac Nx\rfloor-\ln N}{(N+x)\ln(1+\frac1N)}dx.$$
Hence
\begin{align*}
\ln K_{N}-\ln N
\le&
\int_0^1\frac{\ln\dfrac Nx-\ln N}{N\ln(1+\frac1N)}dx
\\=&\frac{1}{N\ln(1+\frac1N)}\int_0^1(-\ln x)dx
\\\to&1\text{ as }N\to\infty,
\end{align*}
and
\begin{align*}
\ln K_{N}-\ln N\ge
&\int_0^1\frac{\ln(\dfrac Nx-1)-\ln N}{(N+x)\ln(1+\frac1N)}dx
\\=&\int_0^1\frac{-\ln x}{(N+x)\ln(1+\frac1N)}dx-
\int_0^1\frac{\ln N-\ln(N-x)}{(N+x)\ln(1+\frac1N)}dx
\\\ge&\int_0^1\frac{-\ln x}{(N+1)\ln(1+\frac1N)}dx-
\int_0^1\frac{\ln N-\ln(N-1)}{N\ln(1+\frac1N)}dx
\\=&\frac{1}{(N+1)\ln(1+\frac1N)}-\frac{\ln N-\ln(N-1)}{N\ln(1+\frac1N)}
\\\to& 1\text{ as }N\to\infty.
\end{align*}
This implies that
$$\lim_{N\to\infty}(\ln K_{N}-\ln N)=1.$$
Equivalently,
$$\lim_{N\to\infty}\frac{K_{N}}{N}=e.$$
\end{enumerate}
\end{proof}

According to Theorem \ref{thmain}, as $p\to\infty$, besides the geometric mean $$K_{N}\sim eN,$$ we have:
\begin{align*}
\text{(harmonic mean)  }& K_{N,-1}\sim 2N,\\
\text{($\frac12$-H\"older mean)  }& K_{N,\frac12}\sim4N
\end{align*}
and so on.

Some computation shows that
\begin{align*}
K_{1}=&2.685452\cdots=1(2.685452\cdots),\\
K_{2}=&5.412652\cdots=2(2.706326\cdots),\\
K_{3}=&8.136460\cdots=3(2.712153\cdots).
\end{align*}
We can see that the convergence of $\dfrac{K_{N}}{N}$ to $e$ is quite fast.

\section{Lypunov Exponents and Growth of Denominators}
In this section we discuss the Lypunov exponents for the map
$T_N$ and the growth of the denominators in the convergents.
\begin{theorem}
For 
Lebesgue almost every $x\in I$, the Lyapunov exponent
of $T_N$
along the orbit of $x$  is
$$\lambda(T_N)=\lim_{n\to\infty}\frac1n\sum_{k=0}^{n-1}\ln|T_{N}'(T_N^k(x))|=2\Lambda_N+
\ln N,$$
where
$$\Lambda_N=\Lambda(\frac1N), \Lambda(x)=\dfrac{\Theta(x)}{\ln(1+x)}$$
and
$$\Theta(x)=\int_{0}^x\frac{\ln(t+1)}tdt=\sum_{k=1}^\infty\frac{(-1)^{k-1}}{k^2}x^k$$
is a dilogarithm function.
\end{theorem}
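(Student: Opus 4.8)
The plan is to evaluate the Lyapunov exponent as a Birkhoff average against the invariant measure $\mu_N$, and then to compute the resulting integral in closed form. First I would record that on each branch $\left(\frac{N}{k+1},\frac{N}{k}\right)$ of $T_N$ one has $T_N(x)=\frac{N}{x}-k$, hence $T_N'(x)=-\frac{N}{x^2}$ and $\ln|T_N'(x)|=\ln N-2\ln x$ for Lebesgue-almost every $x\in I$ (the set of branch endpoints is countable, hence null). The estimate $\int_0^1|\ln t|\,\frac{dt}{N+t}\le\frac1N\int_0^1|\ln t|\,dt=\frac1N<\infty$ shows $\ln|T_N'|\in L^1(\mu_N)$, so Theorem \ref{thcal} applies to the ergodic system $(T_N,\mu_N)$ and gives, for Lebesgue-almost every $x$,
$$\lambda(T_N)=\int_I\ln|T_N'|\,d\mu_N=\ln N-2\int_I\ln t\,d\mu_N=\ln N-\frac{2}{\ln(N+1)-\ln N}\int_0^1\frac{\ln t}{N+t}\,dt.$$

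It then remains to prove the identity $\int_0^1\frac{\ln t}{N+t}\,dt=-\Theta\!\left(\frac1N\right)$. I would integrate by parts taking $u=\ln t$ and, as the antiderivative of $\frac{1}{N+t}$, the function $\ln(N+t)-\ln N=\ln\!\left(1+\frac tN\right)$, which vanishes at $t=0$: the boundary term at $t=1$ is zero since $\ln1=0$, and at $t=0$ it is $O(t\ln t)\to0$, so $\int_0^1\frac{\ln t}{N+t}\,dt=-\int_0^1\frac{\ln(1+t/N)}{t}\,dt$. The substitution $s=t/N$ converts the right-hand side into $-\int_0^{1/N}\frac{\ln(1+s)}{s}\,ds=-\Theta\!\left(\frac1N\right)$. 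Substituting back and using $\ln(N+1)-\ln N=\ln\!\left(1+\frac1N\right)$ then yields
$$\lambda(T_N)=\ln N+\frac{2\,\Theta(1/N)}{\ln(1+1/N)}=\ln N+2\Lambda_N,$$
which is the assertion; the power series for $\Theta$ follows by integrating $\frac{\ln(1+t)}{t}=\sum_{k\ge1}\frac{(-1)^{k-1}}{k}t^{k-1}$ termwise on $[0,x]$ for $|x|\le1$, identifying $\Theta$ with the dilogarithm.

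The computation is largely routine, and the only delicate point is the integration by parts: the naive split $[\ln t\,\ln(N+t)]_0^1-\int_0^1\frac{\ln(N+t)}{t}\,dt$ has both pieces individually divergent at $t=0$, so one must either subtract off the constant $\ln N$ before passing to the limit or carry out the computation on $[\varepsilon,1]$ and let $\varepsilon\to0^+$, observing that $\varepsilon\ln\varepsilon\cdot\ln(1+\varepsilon/N)\to0$. I do not foresee any obstacle beyond this bookkeeping and the routine integrability check needed to license the Birkhoff step.
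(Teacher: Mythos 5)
Your proposal is correct and follows essentially the same route as the paper: apply Theorem \ref{thcal} with $\phi=\ln|T_N'|$ and reduce the resulting integral to $\Theta(1/N)$ by integration by parts together with the substitution $s=t/N$ (the paper merely performs the substitution before the integration by parts rather than after). Your explicit integrability check and the careful handling of the boundary term at $t=0$ are fine additions but do not change the argument.
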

\begin{proof}Apply Theorem \ref{thcal} for $\phi=\ln|T_N'|$:
\begin{align*}
\lambda(T_N)
=&\int_{[0,1)}\ln|T_N'(x)| d\mu_N\\
=&\int_0^1(\ln\frac{N}{x^2})\frac1{\ln (N+1)-\ln N}\cdot\frac 1{x+N}dx\\
=&\frac1{\ln (N+1)-\ln N}\int_{0}^{\frac1N}\frac{-2\ln t-\ln N}{t+1}dt\\
=&\frac1{\ln (N+1)-\ln N}(\int_{0}^{\frac1N}\frac{2\ln(t+1)}tdt-(2\ln t\ln(t+1)+\ln
N\ln (t+1))|_0^{\frac1N})\\
=&\frac2{\ln (N+1)-\ln N}\int_{0}^{\frac1N}\frac{\ln(t+1)}tdt+\ln N\\
\\=&2\Lambda_N+\ln N.
\end{align*}
\end{proof}

We know that $\Theta(1)=\cfrac{\pi^2}{12}$ and hence $\Lambda_1=\cfrac{\pi^2}{12\ln2}$
is the logarithm of L\'evy's constant. As $\Theta$ is increasing, we have
$\Lambda_N<\infty$ for all $N$. L\'evy's constant is generalized as
following:
\begin{theorem}\label{thlevy}
Let $x=[a_1,a_2,\cdots]_N$ and $\dfrac{A_n}{B_n}=[a_1,a_2,\cdots,a_n]_N$
(without reduction).
For Lebesgue almost every $x\in I$,
$$\lim_{n\to\infty}\frac{\ln B_n}{n}=\Lambda_N+\ln N.$$
\end{theorem}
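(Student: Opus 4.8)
The plan is to relate the growth of $\ln B_n$ to the Birkhoff sums of $\ln|T_N'|$ along the orbit of $x$, and then invoke the Lyapunov exponent computation of the previous theorem. First I would record the basic contraction estimate for the cylinder map. Writing $t_n = T_N^n(x)$, the branch of $T_N^n$ carrying $[a_1,\dots,a_n,\,t_n/??]$ — more precisely, if $y\in I$ has the same first $n$ coefficients as $x$, then $y = \dfrac{A_n + \frac{N}{??}\cdots}{\cdots}$; cleaner is the classical identity
\begin{equation*}
x - \frac{A_n}{B_n} = \frac{(-N)^n\,T_N^n(x)}{B_n\,(B_n T_N^n(x) + N B_{n-1})},
\end{equation*}
which follows from Lemma \ref{lemmapnum} by the standard induction (each step uses $A_nB_{n-1}-A_{n-1}B_n = -(-N)^n$ together with the recursion $A_n = a_nA_{n-1}+NA_{n-2}$, and substituting the exact tail $x = [a_1,\dots,a_n, N/T_N^n(x)]_N$). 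Hence $|B_n x - A_n|$ is comparable, up to a factor bounded between $N^{-1}$ and $1$ in rough terms, to $N^n / B_n$. More usefully, taking logarithmic derivatives of the branch inverse shows that the length of the $n$-th cylinder containing $x$ satisfies $m(\text{cyl}_n(x)) \asymp B_n^{-2} N^{n}$, so that
\begin{equation*}
-\frac1n \ln m(\mathrm{cyl}_n(x)) = \frac{2}{n}\ln B_n - \ln N + o(1).
\end{equation*}

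Next I would identify $-\frac1n\ln m(\mathrm{cyl}_n(x))$ with a Birkhoff average. On the interior of the $n$-th cylinder, $T_N^n$ is a diffeomorphism onto $I$, so by the chain rule and the mean value theorem there is a point $\xi$ in the cylinder with $|(T_N^n)'(\xi)| = 1/m(\mathrm{cyl}_n(x))$; by bounded distortion (the maps $T_N$ have the Gauss–Kuzmin–type distortion bound: $\ln|T_N'(u)| - \ln|T_N'(v)|$ is controlled by $|u-v|$ times a constant, since $T_N'(u) = -N/u^2$ and consecutive cylinders shrink geometrically) one gets
\begin{equation*}
\Big| \ln \frac{1}{m(\mathrm{cyl}_n(x))} - \sum_{k=0}^{n-1}\ln|T_N'(T_N^k(x))| \Big| \le C
\end{equation*}
uniformly in $n$. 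Dividing by $n$ and letting $n\to\infty$, the right-hand average converges for a.e.\ $x$ to $\lambda(T_N) = 2\Lambda_N + \ln N$ by the previous theorem (which is itself Birkhoff's theorem applied to $\ln|T_N'| \in L^1(\mu_N)$; one must note $\int \ln(1/x)\,d\mu_N < \infty$, which holds since $1/(N+x)$ is bounded and $\ln(1/x)$ is integrable near $0$). Therefore $\frac1n \ln m(\mathrm{cyl}_n(x)) \to -(2\Lambda_N + \ln N)$ a.e., and combining with the displayed relation between cylinder length and $B_n$ yields $\frac{2}{n}\ln B_n - \ln N \to 2\Lambda_N + \ln N$, i.e.\ $\frac1n\ln B_n \to \Lambda_N + \ln N$.

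The main obstacle is the bounded-distortion step: one must show that the ratio of $|(T_N^n)'|$ at two points of the same $n$-cylinder stays bounded independent of $n$, uniformly over cylinders. This is where the hypothesis $a_k\ge N$ is used — it forces every cylinder of depth one to have length at most $N/(a_k(a_k+1)) \le 1/(N+1) < 1$, and more importantly gives a uniform geometric shrinking rate $m(\mathrm{cyl}_{n+1})/m(\mathrm{cyl}_n) \le \theta < 1$, so that $\sum_n \mathrm{diam}(\mathrm{cyl}_n(x)) < \infty$; feeding this into the telescoping sum $\sum_k |\ln|T_N'(u_k)| - \ln|T_N'(v_k)||$ with $u_k = T_N^k(x)$, $v_k$ the image of the comparison point, and using $|(\ln|T_N'|)'(u)| = |2/u| \le 2(M+1)/N$ on the cylinder where $a_{k+1} = M$, gives a convergent bound provided one is slightly careful near the accumulation point $0$ (small $t$ corresponds to large $a_{k+1}$, where the factor $|2/u|$ blows up but the cylinder is correspondingly tiny, so the product still telescopes — this is exactly the classical Gauss-map distortion argument and goes through verbatim with $1$ replaced by $N$). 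Once distortion is in hand, everything else is the routine combination of Lemma \ref{lemmapnum}, Birkhoff's theorem, and the previous theorem's value of $\lambda(T_N)$.
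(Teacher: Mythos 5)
Your proposal is correct in substance, but it proves the theorem by a genuinely different route than the paper. The paper argues algebraically: from Lemma \ref{lemmapnum} it writes $T_N^k(x)=-\dfrac{B_kx-A_k}{B_{k-1}x-A_{k-1}}$, so that the Birkhoff sum $\frac1n\sum_k\ln|T_N^k(x)|$ telescopes exactly to $\frac1n\ln|B_nx-A_n|$, and then sandwiches $|B_nx-A_n|$ between $\frac{N^{n+1}}{4B_{n+1}}$ and $\frac{N^n}{B_n}$ to identify $\lim\frac1n\ln|B_nx-A_n|$ with $\ln N-\lim\frac1n\ln B_n$; combining with $\lambda(T_N)=2\Lambda_N+\ln N$ gives the result. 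You instead pass through cylinder sets: $m(\mathrm{cyl}_n(x))\asymp N^nB_n^{-2}$, bounded distortion to replace $-\frac1n\ln m(\mathrm{cyl}_n(x))$ by the Birkhoff average of $\ln|T_N'|$, and then the same Lyapunov computation. This is a legitimate and more ``dynamical'' argument; note that the distortion step is much easier than your Rényi-type sketch suggests, because the inverse branches are Möbius: $|\psi_n'(t)|=N^n(B_n+tB_{n-1})^{-2}$ with $t\in[0,1]$ and $B_{n-1}\le B_n$ gives a distortion constant $4$ outright, which also yields the cylinder-length estimate you use. Two small inaccuracies in your write-up, neither load-bearing for your cylinder argument: the displayed identity should read $x-\frac{A_n}{B_n}=\frac{(-N)^nT_N^n(x)}{B_n\,(B_n+T_N^n(x)B_{n-1})}$, and the claim that $|B_nx-A_n|$ is uniformly comparable to $N^n/B_n$ is false as stated, since $T_N^n(x)$ can be arbitrarily small along a subsequence; the paper avoids exactly this by the shifted lower bound $|B_nx-A_n|>\frac{N^{n+1}}{4B_{n+1}}$ and a liminf/limsup comparison. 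What the paper's more hands-on route buys is the pointwise inequality \eqref{genest1}, valid for every $x$ (not just almost every $x$), which is what feeds the later corollary bounding $\liminf_n\frac1n\ln B_n$ from below; your distortion estimates could be made to give this too, but it is immediate in the paper's formulation.
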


\begin{proof}
From Lemma \ref{lemmapnum} we have for every irrational $x\in I$ and every $n$,
$$x=\frac{\frac{N}{T_N^n(x)}A_n+NA_{n-1}}{\frac{N}{T_N^n(x)}B_n+NB_{n-1}}=\frac{A_{n-1}T_N^n(x)+A_n}{B_{n-1}T_N^n(x)+B_n}.$$
Make a substitution and we have
$$T_N^n(x)=-\frac{B_{n}x-A_{n}}{B_{n-1}x-A_{n-1}}.$$
But for Lebesgue almost every $x\in[0,1)$,
\begin{align*}
\lambda(T_N)=&\lim_{n\to\infty}\frac1n\sum_{k=1}^{n}\ln|T_N'(T_N^k(x))|
\\=&\lim_{n\to\infty}\frac1n\sum_{k=1}^{n}\ln|\frac{N}{(T_N^k(x))^2}|
\\=&\ln N-\lim_{n\to\infty}\frac2n\sum_{k=1}^{n}\ln|T_N^k(x)|
\\=&\ln N-\lim_{n\to\infty}\frac2n\sum_{k=1}^{n}\ln|\frac{B_{n}x-A_{n}}{B_{n-1}x-A_{n-1}}|
\\=&\ln N-\lim_{n\to\infty}\frac2n\ln|B_nx-A_n|.
\end{align*}

We have
\begin{align*}
B_nx-A_n=&B_n\frac{A_{n}+T_N^{n-1}(x)A_{n-1}}{B_n+T_N^{n-1}(x)B_{n-1}}-A_n
=\frac{(-N)^nT_N^{n-1}(x)}{B_n+T_N^{n-1}(x)B_{n-1}}.
\end{align*}
Then
$$|B_nx-A_n|\le\frac{N^n}{B_n}, $$
$$\liminf_{n\to\infty}\frac1n\ln|B_nx-A_n|\le\liminf_{n\to\infty}\frac1n\ln\frac{N^n}{B_n}$$
and
$$|B_nx-A_n|\ge\frac{N^n\frac{N}{a_n+1}}{2B_n}>\frac{N^{n+1}}{4B_{n+1}},$$
$$\limsup_{n\to\infty}\frac1n\ln|B_nx-A_n|\ge\limsup_{n\to\infty}\frac1n\ln\frac{N^{n+1}}{4B_{n+1}}=\limsup_{n\to\infty}\frac1n\ln\frac{N^n}{B_n}.$$
Hence
\begin{equation}\label{precis}
\lim_{n\to\infty}\frac1n\ln|B_nx-A_n|
=\lim_{n\to\infty}\frac1n\ln\frac{N^n}{B_n}=\ln N-\lim_{n\to\infty}\frac{\ln
B_n}{n}
\end{equation}
and
$$\lambda(T_N)=2\Lambda_N+\ln N=2\lim_{n\to\infty}\frac{\ln B_n}{n}-\ln N.$$
The result follows.
\end{proof}
\begin{remark}
By (\ref{precis}), for Lebesgue almost every $x\in[0,1)$, 
\begin{equation}\label{rateconv}
\lim_{n\to\infty}-\frac1n\ln|x-\frac{A_n}{B_n}|
=2\lim_{n\to\infty}\frac{\ln
B_n}{n}-\ln N=2\Lambda_N+\ln N=\lambda(T_N).
\end{equation}
This measures the precision of the $n$-th convergents in the $N$-continued fractions, which is a generalization
of Loch's constant (the generalized constant is $\dfrac{\ln 10}{2\Lambda_N+
\ln N}$). As $\lambda(T_N)$ increases with $N$ (see the remark
after Proposition \ref{propinc}), the larger $N$
is, the faster the convergents converge.

(\ref{rateconv}) may also be obtained directly by Shannon-McMillan-Breiman
Theorem. Our approach finds its advantage in the equations \eqref{genest1}.
\end{remark}

We note the following facts about $\Lambda_N$:
\begin{proposition}
$\lim_{N\to\infty}\Lambda_N=1$.
\end{proposition}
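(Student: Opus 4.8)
The plan is to reduce the claim to computing $\lim_{x\to 0^+}\Lambda(x)$, since $\Lambda_N=\Lambda(1/N)$ and $1/N\to 0$ as $N\to\infty$. Both $\Theta(x)\to 0$ and $\ln(1+x)\to 0$ as $x\to 0^+$, and the ratio of their leading (linear) terms is $1$, so one expects $\Lambda(x)\to 1$; the only work is to make this rigorous, which I would do with explicit two-sided bounds.

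First I would exploit the alternating series representations. For $0<x\le 1$ the series $\Theta(x)=\sum_{k\ge 1}(-1)^{k-1}x^k/k^2$ and $\ln(1+x)=\sum_{k\ge 1}(-1)^{k-1}x^k/k$ are alternating with terms strictly decreasing in $k$ (because $xk^2/(k+1)^2<1$ and $xk/(k+1)<1$), so truncating after the first and second terms gives
\begin{align*}
x-\frac{x^2}{4}\le\Theta(x)\le x,\qquad x-\frac{x^2}{2}\le\ln(1+x)\le x.
\end{align*}
Since $\ln(1+x)>0$ for $x>0$, dividing the appropriate inequalities yields, for $0<x\le 1$,
\begin{align*}
1-\frac{x}{4}\le\Lambda(x)=\frac{\Theta(x)}{\ln(1+x)}\le\frac{x}{x-x^2/2}=\frac{1}{1-x/2}.
\end{align*}

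Then, setting $x=1/N$, I obtain $1-\frac{1}{4N}\le\Lambda_N\le\frac{1}{1-1/(2N)}$ for every $N\ge 1$, and both bounds tend to $1$ as $N\to\infty$, so the squeeze theorem finishes the proof. Alternatively, one can bypass the inequalities entirely using $\Theta(0)=0=\ln 1$ and $\Theta'(x)=\ln(1+x)/x$: by L'H\^opital's rule,
\begin{align*}
\lim_{x\to 0^+}\frac{\Theta(x)}{\ln(1+x)}=\lim_{x\to 0^+}\frac{\ln(1+x)/x}{1/(1+x)}=\lim_{x\to 0^+}(1+x)\cdot\frac{\ln(1+x)}{x}=1.
\end{align*}
There is no genuine obstacle here; the only point requiring a line of care is verifying that the two power series are truly alternating with monotone terms on $(0,1]$, which is immediate, and noting that $x=1/N\le 1$ always lies in (or at the boundary of) the domain of convergence.
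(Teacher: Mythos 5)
Your proof is correct, and in fact contains two arguments: the ``alternative'' you give at the end (using $\Theta(0)=0$, $\Theta'(x)=\ln(1+x)/x$ and L'H\^opital) is precisely the paper's proof, while your primary route is genuinely different and more elementary. Instead of differentiating, you truncate the two alternating series $\Theta(x)=\sum_{k\ge1}(-1)^{k-1}x^k/k^2$ and $\ln(1+x)=\sum_{k\ge1}(-1)^{k-1}x^k/k$ after one and two terms (the monotonicity checks on $(0,1]$ are correct), obtaining the two-sided bound $1-\tfrac{1}{4N}\le\Lambda_N\le(1-\tfrac{1}{2N})^{-1}$ and concluding by squeezing. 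What this buys over the paper's one-line L'H\^opital computation is an explicit, quantitative error bound $\Lambda_N=1+O(1/N)$, which is consistent with (though weaker than) the asymptotic expansion $\Lambda_N=1+\tfrac{1}{4N}-\tfrac{7}{72N^2}+\cdots$ stated without proof in the paper's remark; note only that your lower bound $1-\tfrac{1}{4N}$ is not sharp, since in fact $\Lambda_N>1$, but this is harmless for the limit. The paper's argument is shorter and needs no series manipulation, only the observation $\Theta'(x)=\ln(1+x)/x$; both are complete proofs.
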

\begin{remark}
This verifies \eqref{eqLN} in Theorem \ref{thmaincoeff}.
\end{remark}
\begin{proof} Note that $\Theta'(x)=\cfrac{\ln(1+x)}{x}$. So
\begin{align*}
\lim_{N\to\infty}\Lambda_N=\lim_{x\to0}\frac{\Theta(x)}{\ln(1+x)}
=\lim_{x\to 0}\frac{\Theta'(x)}{\frac{1}{1+x}}
=\lim_{x\to 0}\frac{\ln(1+x)}{x}
=1.
\end{align*}
\end{proof}
\begin{remark}
More computation shows that as $N\to\infty$,
$$\Lambda_N=1+\frac1{4N}-\frac{7}{72N^2}+\frac{1}{18N^3}+O(\frac1{N^4}),$$
which provides a good estimate for $\Lambda_N$ when $N$ is large.
\end{remark}
\begin{proposition}\label{propinc}
$\Lambda_N$ decreases as $N$ increases.
\end{proposition}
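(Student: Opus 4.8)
The plan is to deduce the claim from the stronger statement that $\Lambda(x)=\Theta(x)/\ln(1+x)$ is strictly increasing on $(0,\infty)$: since $\Lambda_N=\Lambda(1/N)$ and $1/N$ strictly decreases in $N$, this yields $\Lambda_N>\Lambda_{N+1}$ at once.

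To see the monotonicity of $\Lambda$, first I would substitute $t=e^s-1$ in the integral defining $\Theta$. Setting $u=\ln(1+x)$, this gives
$$\Theta(x)=\int_0^x\frac{\ln(1+t)}{t}\,dt=\int_0^u\frac{s}{1-e^{-s}}\,ds,$$
so that
$$\Lambda(x)=\frac1u\int_0^u g(s)\,ds,\qquad g(s):=\frac{s}{1-e^{-s}};$$
in other words $\Lambda(x)$ is the mean value of $g$ over $[0,u]$, and $u$ sweeps out all of $(0,\infty)$ as $x$ does. Next I would check that $g$ is strictly increasing on $(0,\infty)$: a direct computation gives $g'(s)=\bigl(1-(1+s)e^{-s}\bigr)/(1-e^{-s})^2$, whose numerator is positive for $s>0$ because $e^s>1+s$. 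It then remains to invoke the elementary fact that the running average $A(u):=\frac1u\int_0^u g(s)\,ds$ of a strictly increasing function is itself strictly increasing, which follows from
$$A'(u)=\frac{u\,g(u)-\int_0^u g(s)\,ds}{u^2}=\frac1{u^2}\int_0^u\bigl(g(u)-g(s)\bigr)\,ds>0.$$
Chaining these, $\Lambda$ is strictly increasing and the proposition follows.

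I do not anticipate a real obstacle; the only genuinely useful observation is the change of variables, which recasts $\Lambda$ as the mean of a monotone function and thereby makes the sign of the derivative transparent. One could instead work directly with
$$\Lambda'(x)=\frac{1}{\ln^2(1+x)}\left(\frac{\ln^2(1+x)}{x}-\frac{\Theta(x)}{1+x}\right),$$
reducing the problem to the inequality $(1+x)\ln^2(1+x)>x\,\Theta(x)$ for $x>0$ (provable by differentiating twice and using power-series bounds), but this route is more computational, so I would present the substitution argument.
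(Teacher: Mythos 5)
Your proof is correct, but it takes a genuinely different route from the paper's. The paper argues directly on the formula $\Lambda(x)=\Theta(x)/\ln(1+x)$: it computes $\Lambda'(x)=\frac1x-\frac{\Theta(x)}{(1+x)\ln^2(1+x)}$, reduces positivity of $\Lambda'$ to the inequality $\frac{(1+x)\ln^2(1+x)}{x}>\Theta(x)$ on $(0,1)$, and proves that by comparing derivatives of the two sides (both vanish at $0$), which after simplification boils down to $x>\ln(1+x)$ --- essentially the ``more computational'' alternative you sketch at the end. Your main argument instead substitutes $t=e^s-1$, $u=\ln(1+x)$, to rewrite
$$\Lambda(x)=\frac1u\int_0^u\frac{s}{1-e^{-s}}\,ds,$$
i.e.\ the running average of the increasing function $g(s)=s/(1-e^{-s})$, and then invokes the standard fact that the running average of a strictly increasing function is strictly increasing. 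This is cleaner and more conceptual: the monotonicity of $g$ (from $e^s>1+s$) and of the average are each one-line checks, the argument yields strict monotonicity of $\Lambda$ on all of $(0,\infty)$ rather than just the interval $(0,1)$ actually needed for $1/N$, and it explains structurally why $\Lambda$ increases. What the paper's approach buys in exchange is that it stays entirely within the objects already on the page ($\Theta$, $\Lambda$, and elementary logarithm inequalities) without introducing a change of variables, and as a by-product it records the explicit expression for $\Lambda'(x)$ used elsewhere in the paper's remarks. Both proofs are complete and elementary; yours would be a valid and arguably more transparent replacement.
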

\begin{proof}
It suffices to show that $\Lambda'(x)>0$ for all $x\in(0,1)$. We have
\begin{align*}
&\Lambda'(x)>0 \text{ for
all }x\in (0,1)
\\ \iff&\frac1x>\frac{\Theta(x)}{(1+x)\ln^2(1+x)}\text{ for
all }x\in (0,1)
\\ \iff&\frac{(1+x)\ln^2(1+x)}{x}>\Theta(x)\text{ for
all }x\in (0,1)
\\ \Longleftarrow &\frac{x(\ln^2(1+x)+2\ln(1+x))-(1+x)\ln^2(1+x)}{x^2}>\Theta'(x)
\text{ for
all }x\in (0,1)
\\ \iff&2x-\ln(1+x)>x\text{ for
all }x\in (0,1)
\\ \iff&x>\ln(1+x)\text{ for
all }x\in (0,1)
\\ \Longleftarrow &1>\frac{1}{1+x}\text{ for
all }x\in (0,1).
\end{align*}
The last inequality is true.
\end{proof}

\begin{remark}
However, a similar argument shows that
$\lambda(T_N)=2\Lambda_N+\ln N$ increases with $N$.
\end{remark}

\section{Lower Bounds for Non-regular Points}





Ergodic theory does not tell us everything. There are orbits along which
the Lyapunov exponents
do not exist (for rational numbers, or irrationals for which the limit
do not exist) or they differ from $\lambda(T_N)$. Analogous to \cite{Cor},
we have some estimates along such non-regular orbits.

Denote by $\lambda_N(x)$ the Lyapunov exponent of $T_N$ along the orbit of
$x$. Let
$$\lambda_N^-(x):=\liminf_{n\to\infty}\frac1n\sum_{k=0}^{n-1}\ln|T_{N}'(T_N^k(x))|,$$
which coincides with $\lambda_N(x)$ if the Lyapunov exponent exists.
For example, for
the fixed point $$z_{N,p}=[p,p,p,\cdots]_N=\frac{\sqrt{p^2+4N}-p}{2},$$ where $p\ge N$, we have
$$\lambda_N(z_{N,p})=\lambda_N^-(z_{N,p})=\ln|T_N'(z_{N,p})|
=2\ln\frac{\sqrt{p^2+4N}+p}{2}-\ln
N.$$
So there are orbits with arbitrarily large Lyapunov exponents.
Then we have a generalization of a theorem in \cite{Cor} that gives a
lower bound for $\lambda_N^-$.

\begin{theorem}
For every $x\in I$,
$$\lambda_N^-(x)\ge 2\ln\frac{\sqrt{N+4}+\sqrt{N}}{2}.$$
\end{theorem}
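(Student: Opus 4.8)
The plan is to bound $\lambda_N^-(x)$ from below by controlling, along any orbit, the product of the local expansion factors $|T_N'(T_N^k(x))| = N/(T_N^k(x))^2$. Writing $x_k = T_N^k(x)$ for the orbit, we have
$$
\frac{1}{n}\sum_{k=0}^{n-1}\ln|T_N'(x_k)| = \ln N - \frac{2}{n}\sum_{k=0}^{n-1}\ln x_k,
$$
so the question reduces to an \emph{upper} bound for $\frac1n\sum\ln x_k$, i.e.\ a statement that the orbit cannot spend too much time near $1$ (where $x_k$ is largest and $\ln x_k$ closest to $0$). The key observation is the one-step inequality relating $x_{k+1}$ to $x_k$: since $x_{k+1} = T_N(x_k) = N/x_k - a_{k+1}$ with $a_{k+1}\ge N$, we get $x_{k+1} \le N/x_k - N = N(1-x_k)/x_k$, hence
$$
x_k\,x_{k+1} \le N(1-x_k) \le N.
$$
This means that two consecutive orbit points cannot both be close to $1$; more precisely $\ln x_k + \ln x_{k+1} \le \ln N + \ln(1-x_k) \le \ln N$, but the sharper route is to keep the $N(1-x_k)$ factor and iterate.

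The main step is to extract from $x_k x_{k+1}\le N(1-x_k)$ a bound on $\frac1n\sum_{k=0}^{n-1}\ln x_k$ that matches the claimed constant. Pairing consecutive terms and using $\ln x_k + \ln x_{k+1} \le \ln\bigl(N(1-x_k)\bigr)$, one is led to maximize $\ln(N(1-t)) $ — but this is unbounded below as $t\to1$, so naive pairing is too lossy; instead I would treat it as a two-variable optimization. For a pair $(x_k, x_{k+1})$ constrained by $x_k x_{k+1}\le N(1-x_k)$ and $x_k,x_{k+1}\in(0,1)$, the quantity $x_k x_{k+1}$ is maximized (hence $\ln x_k + \ln x_{k+1}$ is maximized) at the boundary, and a short calculus exercise shows $\max x_k x_{k+1}$ over this region is attained when $x_{k+1} = N(1-x_k)/x_k$ and then $x_k x_{k+1} = N(1-x_k)$, which as a function of $x_k\in(0,1)$ with the additional constraint $x_{k+1}<1$, i.e.\ $N(1-x_k) < x_k$, i.e.\ $x_k > N/(N+1)$, is maximized at... here one finds the product is at most $N/(N+1)$ is \emph{not} the bound; rather the relevant fixed point is $z$ with $z^2 = N(1-z)$, giving $z = (\sqrt{N^2+4N}-N)/2$ and $x_k x_{k+1}\le z^2$ along the optimal configuration. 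Thus $\frac1n\sum\ln x_k \le \ln z$, which yields
$$
\lambda_N^-(x) \ge \ln N - 2\ln z = \ln N - 2\ln\frac{\sqrt{N^2+4N}-N}{2} = 2\ln\frac{\sqrt{N^2+4N}+N}{2},
$$
using $\frac{\sqrt{N^2+4N}-N}{2}\cdot\frac{\sqrt{N^2+4N}+N}{2} = N$. I would then reconcile this with the stated bound $2\ln\frac{\sqrt{N+4}+\sqrt N}{2}$: since $(\sqrt{N+4}+\sqrt N)^2 = 2N+4+2\sqrt{N^2+4N}$ while $\bigl(\tfrac{\sqrt{N^2+4N}+N}{2}\bigr)^2$ has a different form, the paper's cleaner-looking bound is the same quantity after the substitution $N\mapsto$ something, or it is a (slightly weaker) consequence; I would verify which and present the argument so that the displayed constant in the theorem comes out exactly.

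The honest structure of the proof I would write: (1) reduce to bounding $\limsup_n \frac1n\sum_{k=0}^{n-1}\ln x_k$ from above; (2) establish the recursive inequality $x_k x_{k+1}\le N(1-x_k)$, and more usefully its telescoped/averaged consequence; (3) do the optimization over admissible orbit segments — the cleanest version is to show that for every $k$, $\ln x_k + \ln x_{k+1}\le 2\ln z_N$ where $z_N = (\sqrt{N^2+4N}-N)/2$ is the relevant fixed point, by checking the constrained maximum of $s+t$ subject to $e^{s+t}\le N(1-e^s)$; (4) sum over $k$ in consecutive pairs (handling the parity of $n$ with a vanishing error term), take $\frac1n$, let $n\to\infty$, and substitute back. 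The main obstacle is Step (3): the constraint region is not symmetric in $x_k$ and $x_{k+1}$, so the pairing bound must be set up carefully — in particular one must check that it is legitimate to bound each adjacent pair independently rather than losing a constant, which works because the constraint $x_k x_{k+1}\le N(1-x_k)$ only couples $x_k$ with its successor and the extremal value $2\ln z_N$ is achieved consistently along the constant orbit $[p,p,\dots]_N$ with $p=N$ (the point $z_{N,N}$), confirming the bound is sharp. I expect the calibration between my $z_N$ form and the theorem's $\frac{\sqrt{N+4}+\sqrt N}{2}$ form to require a small separate computation, which I would fold in at the end.
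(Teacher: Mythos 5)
Your reduction (write $\lambda_N^-(x)=\ln N-2\limsup\frac1n\sum\ln x_k$ and use the one-step relation $x_kx_{k+1}=N-a_{k+1}x_k\le N(1-x_k)$) is the right starting point, and you correctly identify $z_N=\frac{\sqrt{N^2+4N}-N}{2}$ (with $z_N^2=N(1-z_N)$) as the extremal fixed point. But the central claim of your Step (3) is false: it is \emph{not} true that $x_kx_{k+1}\le z_N^2$ for every admissible pair. The constrained maximum of $x_kx_{k+1}$ subject only to $x_kx_{k+1}\le N(1-x_k)$ and $x_k,x_{k+1}\in(0,1)$ is $\sup_t\min\bigl(t,N(1-t)\bigr)=\frac{N}{N+1}$, which is strictly larger than $z_N^2=N(1-z_N)$ because $z_N=\frac{N}{N+z_N}>\frac{N}{N+1}$. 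Concretely, for $N=1$ take $x_k=0.6$: then $a_{k+1}=1$, $x_{k+1}=\frac23$, and $x_kx_{k+1}=0.4>z_1^2=\frac{3-\sqrt5}{2}\approx0.382$. So the two-variable optimization you invoke does not produce the fixed-point value (you even noticed the answer coming out as $\frac{N}{N+1}$ and then asserted $z_N^2$ anyway); pairing consecutive terms with the true pairwise bound only yields $\lambda_N^-(x)\ge\ln(N+1)$, which is weaker than the theorem. There is also an arithmetic slip at the end: $\ln N-2\ln z_N=2\ln\frac{\sqrt{N+4}+\sqrt N}{2}$ exactly (since $\frac{N}{z_N^2}=\frac{N+2+\sqrt{N^2+4N}}{2}$), not $2\ln\frac{\sqrt{N^2+4N}+N}{2}$, which exceeds the correct constant by $\ln N$ and would contradict sharpness at $z_{N,N}$ for $N\ge2$.

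The missing idea, which is how the paper argues, is a case analysis rather than a uniform pairwise bound. Set $c=\frac{\sqrt{N+4}+\sqrt N}{2}$, so $c^2=\frac{N}{z_N^2}$. If $x_k\le z_N$, the \emph{single} factor already satisfies $\frac{N}{x_k^2}\ge\frac{N}{z_N^2}=c^2$ and no pairing is needed. If $x_k>z_N$, then (and only then) your inequality improves to $x_kx_{k+1}\le N(1-x_k)<N(1-z_N)=z_N^2$, so the \emph{pair} of factors satisfies $\frac{N}{x_k^2}\cdot\frac{N}{x_{k+1}^2}>\frac{N^2}{z_N^4}=c^4$. Scanning $k=0,1,\dots,n-1$ greedily, the index set decomposes into singletons contributing at least $c^2$ each and adjacent pairs contributing at least $c^4$ each, with at most one leftover index handled by the trivial bound $\frac{N}{x_k^2}>N\ge1$; hence $\prod_{k=0}^{n-1}\frac{N}{x_k^2}\ge c^{2(n-1)}$ for every $n$, which gives $\lambda_N^-(x)\ge2\ln c$ for every $x$, rational orbits included. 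As written, your proposal cannot reach the stated constant without this extra use of the hypothesis $x_k>z_N$ inside the paired case.
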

\begin{proof}
Let $x=[a_1, a_2,\cdots]_N$ such that $\lambda_N(x)$ exists and denote $x_k=T_N^k(x)$.
Then
$$\lambda_N^-(x)=\liminf_{n\to\infty}\frac1n\sum_{k=0}^{n-1}\ln|T_N'(x_k)|=\liminf_{n\to\infty}\frac1n\sum_{k=0}^{n-1}\ln\frac{N}{x_k^2}.$$
But for each $k$, we have either
$$\frac{N}{x_k^2}\ge(\frac{\sqrt{N+4}+\sqrt{N}}{2})^2\iff x_k\le\frac{\sqrt{N^2+4N}-N}{2},$$
or $$x_k>\frac{\sqrt{N^2+4N}-N}{2}.$$
In the latter case, we have $x_{k}=\cfrac{N}{a_{k+1}+x_{k+1}}$ and hence
$$\frac{x_{k+1}}{a_{k+1}+x_{k+1}}=1-\frac{a_{k+1}}{N}x_k<1-\frac{\sqrt{N^2+4N}-N}{2}=(\frac{\sqrt{N+4}-\sqrt{N}}{2})^2,$$
$$\frac{N}{x_k^2}\frac{N}{x_{k+1}^2}=(\frac{a_{k+1}+x_{k+1}}{x_{k+1}})^2>(\frac{\sqrt{N+4}+\sqrt{N}}{2})^4.$$
Note that $x_k<1$ for all $k$. Therefore for every $n$, we always have
$$\prod_{k=0}^{n-1}\frac{N}{x_k^2}\ge((\frac{\sqrt{N+4}+\sqrt{N}}{2})^{2})^{n-1}.$$
Hence $$\lambda_N^-(x)\ge 2\ln\frac{\sqrt{N+4}+\sqrt{N}}{2}.$$
\end{proof}
\begin{remark}
The lower bound is achieved at infinitely many points, for example, the preimages
of $$z_{N,N}=[N,N,N,\cdots]_N=\frac{\sqrt{N^2+4N}-N}{2},$$
which are analogs of the so-called Noble numbers (for the regular
continued fraction). 
\end{remark}

From the proof of Theorem \ref{thlevy}, we can see that the following estimate holds
for every $x\in I$:


\begin{equation}\label{genest1}
\liminf_{n\to\infty}\frac{\ln B_n}{n}\ge\frac12(\ln N-\limsup_{n\to\infty}\frac1n|B_nx-A_n|)
\ge
\frac{\lambda_N^-(x)+
\ln N}{2}
\end{equation}
and if $\lambda_N(x)$ exists,
\begin{equation*}
\lim_{n\to\infty}\frac{\ln B_n}{n}=
\frac{\lambda_N(x)+
\ln N}{2}.
\end{equation*}
So we have:

\begin{corollary}
For every $x\in I$,
$$\liminf_{n\to\infty}\frac{\ln B_n}{n}\ge\ln\frac{\sqrt{N^2+4N}+N}{2}.$$
\end{corollary}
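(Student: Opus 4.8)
The plan is to derive the corollary as an immediate consequence of the theorem just stated together with the estimate \eqref{genest1}. The previous theorem gives, for every $x \in I$, the lower bound
$$\lambda_N^-(x) \ge 2\ln\frac{\sqrt{N+4}+\sqrt{N}}{2}.$$
I would first observe that \eqref{genest1} already records the chain of inequalities
$$\liminf_{n\to\infty}\frac{\ln B_n}{n} \ge \frac{\lambda_N^-(x)+\ln N}{2},$$
valid for every $x\in I$ (this is stated in the excerpt as a consequence of the proof of Theorem \ref{thlevy}, so I may assume it). Plugging the lower bound on $\lambda_N^-(x)$ into this gives
$$\liminf_{n\to\infty}\frac{\ln B_n}{n} \ge \ln\frac{\sqrt{N+4}+\sqrt{N}}{2} + \frac{\ln N}{2} = \frac12\left(2\ln\frac{\sqrt{N+4}+\sqrt{N}}{2} + \ln N\right).$$

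The only real content is then the algebraic identity
$$\ln\frac{\sqrt{N+4}+\sqrt{N}}{2} + \frac{\ln N}{2} = \ln\frac{\sqrt{N^2+4N}+N}{2}.$$
To verify this I would exponentiate: the left side equals $\ln\!\left(\frac{\sqrt{N}\,(\sqrt{N+4}+\sqrt{N})}{2}\right) = \ln\!\left(\frac{\sqrt{N(N+4)}+N}{2}\right)$, and since $N(N+4) = N^2+4N$ this is exactly the right side. So the claimed bound follows.

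There is essentially no obstacle here: the corollary is a packaging of the preceding theorem through \eqref{genest1}, and the "hard part" — obtaining the lower bound on $\lambda_N^-(x)$ via the dichotomy argument on whether $x_k \le \frac{\sqrt{N^2+4N}-N}{2}$ — has already been carried out. The one point to be careful about is that \eqref{genest1} must be invoked for \emph{all} $x\in I$, including rationals and irrationals whose Lyapunov exponent fails to exist; but that is precisely the scope in which \eqref{genest1} was asserted, so no extra work is needed. I would write the proof as: combine \eqref{genest1} with the previous theorem, then simplify the resulting logarithm using $\sqrt{N}\sqrt{N+4} = \sqrt{N^2+4N}$.
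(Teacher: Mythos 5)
Your proposal is correct and is exactly the argument the paper intends: the corollary follows by combining \eqref{genest1} with the lower bound $\lambda_N^-(x)\ge 2\ln\frac{\sqrt{N+4}+\sqrt{N}}{2}$, and your simplification using $\sqrt{N}\sqrt{N+4}=\sqrt{N^2+4N}$ is the right (and only) algebraic step. No gaps.
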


\end{document}